\documentclass[a4paper,twoside,11pt,reqno]{amsart}

\usepackage[T1]{fontenc}
\usepackage[utf8]{inputenc}
\usepackage{lmodern}
\usepackage{amsmath,amssymb,mathtools}
\usepackage{microtype}
\usepackage{needspace}
\usepackage[colorlinks=true,linkcolor=blue,citecolor=blue,urlcolor=blue]{hyperref}
\hypersetup{
  pdftitle={Sharp hypercontractivity for free orthogonal quantum groups of Kac type},
  pdfauthor={Haonan Zhang},
  pdfsubject={Sharp hypercontractivity and logarithmic Sobolev inequalities for free orthogonal quantum groups of Kac type},
  pdfkeywords={free orthogonal quantum groups, Kac type, hypercontractivity, logarithmic Sobolev inequalities, quantum Markov semigroups}
}

\numberwithin{equation}{section}

\theoremstyle{plain}
\newtheorem{theorem}{Theorem}[section]
\newtheorem{proposition}[theorem]{Proposition}
\newtheorem{lemma}[theorem]{Lemma}

\theoremstyle{remark}

\newcommand{\OF}{O_F^+}
\newcommand{\N}{\mathbb N}
\newcommand{\Nzero}{\mathbb N_0}
\newcommand{\C}{\mathbb C}
\newcommand{\1}{\mathbf 1}
\newcommand{\Pol}{\operatorname{Pol}}
\newcommand{\Ent}{\operatorname{Ent}}
\newcommand{\Tr}{\operatorname{Tr}}

\newcommand{\id}{\mathrm{id}}
\newcommand{\Fou}{\mathcal F}
\newcommand{\qint}[1]{[#1]}
\newcommand{\qfact}[1]{[#1]!}
\newcommand{\ip}[2]{\left\langle #1,#2\right\rangle}

\title[Sharp hypercontractivity for Kac-type $O_F^+$]
{Sharp hypercontractivity for free orthogonal quantum groups of Kac type}

\author[H. Zhang]{Haonan Zhang}
\address{Department of Mathematics, University of South Carolina, Columbia, SC 29208, USA}
\email{haonanzhangmath@gmail.com}

\date{August 15, 2026}

\subjclass[2020]{Primary 46L53, 47D03; Secondary 20G42, 46L52}
\keywords{Kac-type free orthogonal quantum groups, hypercontractivity, logarithmic Sobolev inequalities, quantum Markov semigroups}

\begin{document}

\begin{abstract}
We prove that the normalized heat semigroup $P_t=e^{-tL}$ on every free
orthogonal quantum group $O_F^+$ of Kac type satisfies hypercontractivity with
the optimal time.  More precisely, for all $1<p\leq r<\infty$,
\[
 \|P_t:L_p(O_F^+)\to L_r(O_F^+)\|\leq1
 \quad\Longleftrightarrow\quad
 t\geq\frac12\log\frac{r-1}{p-1}.
\]
Equivalently, the associated logarithmic Sobolev inequalities hold with sharp
constants.
For $F=I_N$, this determines the exact optimal time for $O_N^+$ and resolves
a conjecture of Brannan, Vergnioux, and Youn
\cite{BVY21}.
\end{abstract}

\maketitle

\section{Introduction and main results}\label{sect:intro}

This paper is devoted to the hypercontractivity problem for the heat semigroup on free orthogonal quantum groups of Kac type. We first introduce the notation needed to state the main results; the requisite background is recalled in Section~\ref{sect2}.

Let $N\geq2$ and let $F\in GL_N(\C)$ satisfy
\begin{equation}\label{eq:F-hyp-intro}
  F\overline F=\varepsilon I_N,
  \qquad \varepsilon\in\{\pm1\},
  \qquad F^*F=I_N.
\end{equation}
The underlying $C^\ast$-algebra of the free orthogonal quantum group $\OF$ is the universal $C^\ast$-algebra generated by the coefficients of a unitary $u=(u_{ij})$ subject to
\[
  u=F\overline uF^{-1},
  \qquad \overline u=(u_{ij}^*).
\]
Under the assumption $F^\ast F=I_N$, $\OF$ is of Kac type, and its Haar state $h$ is a trace, with respect to which we define the noncommutative $L_p$-spaces $L_p(\OF)$. 

Let $u^k=(u_{ij}^k)_{1\le i,j\le d_k}$, $k\in\Nzero$, be the irreducible representations of $\OF$, with $u^0=\1$ and $u^1=u$. We write $\Pol(\OF)$ for the Hopf $*$-algebra of polynomial functions, which is the linear span of all $u^k_{ij}$ with $1\le i,j\le d_k$ and $k\in\Nzero$. It is dense in $L_p(\OF)$ for every $1\leq p<\infty$, and the normalized heat semigroup $P_t$ considered in this paper is determined by
\[
  P_t=e^{-tL},
  \qquad
  P_t(u_{ij}^k)=e^{-t\alpha_k}u_{ij}^k,
\]
where
\begin{equation}\label{eq:intro-alpha}
  \alpha_k=N\frac{U_k'(N)}{U_k(N)}.
\end{equation}
Here $(U_k)_{k\geq0}$ are the monic Chebyshev polynomials of the second kind,
\[
      U_0(x)=1,
  \quad U_1(x)=x,
  \quad U_{k+1}(x)=xU_k(x)-U_{k-1}(x).
\]
In particular, $\alpha_0=0$ and $\alpha_1=1$.

Our main result is the sharp hypercontractive estimate for $(P_t)$.

\begin{theorem}[Sharp hypercontractivity]\label{thm:hc}
Let $F$ satisfy \eqref{eq:F-hyp-intro}, let $N\geq2$, and let
$1<p\leq r<\infty$. Then, for $t\geq0$, the normalized heat semigroup
$P_t=e^{-tL}$ satisfies
\begin{equation}\label{eq:hc-normalized}
  \|P_t:L_p(\OF)\to L_r(\OF)\|\leq1
  \quad\Longleftrightarrow\quad
  t\geq\frac12\log\frac{r-1}{p-1}.
\end{equation}
\end{theorem}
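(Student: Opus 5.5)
The necessity direction is the standard one. Suppose $\|P_t:L_p\to L_r\|\le1$. We test on $f=\1+\epsilon x$ with $x=x^*$ a nonzero self-adjoint element of the span of the $u^1_{ij}$, so that $h(x)=0$ and $P_tx=e^{-t}x$ because $\alpha_1=1$. A second-order expansion in $\epsilon$ gives
\[
\|\1+\epsilon x\|_q^q=1+\tfrac{q(q-1)}2\epsilon^2h(x^2)+O(\epsilon^3).
\]
Comparing $\|P_tf\|_r$ with $\|f\|_p$ at order $\epsilon^2$ then yields $(r-1)e^{-2t}\le p-1$, i.e.\ $t\ge\frac12\log\frac{r-1}{p-1}$.

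For sufficiency we use Gross's theorem. Since $h$ is tracial and $(P_t)$ is a symmetric quantum Markov semigroup, a Dirichlet form argument as in Olkiewicz--Zegarlinski applies. In the tracial setting it reduces hypercontractivity with optimal time, for all $1<p\le r<\infty$, to two ingredients. The first is the logarithmic Sobolev inequality
\[
\Ent(|f|^2)\le 2\langle f,Lf\rangle
\]
for all $f\in\Pol(\OF)$, where $\Ent(g)=h(g\log g)-h(g)\log h(g)$. The second is regularity of the $L_p$ Dirichlet forms, which holds for completely Dirichlet (and hence, here, tracially symmetric) semigroups. So the task is to prove the LSI with constant $2$, equivalently hypercontractivity $L_2\to L_4$ at time $\frac12\log 3$, extended to all exponents.

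To obtain the LSI we plan to transfer from a classical or free model where the sharp LSI is known, and then handle the error. The natural tools are the following.
\begin{enumerate}
\item The character algebra. The $*$-algebra generated by $\chi_1=\Tr(u)$ is commutative, $\chi_1$ is semicircular on $[-N,N]$ (the case $F=I_N$ is the model), and $P_t$ restricts to the Markov semigroup with eigenfunctions $U_k(x)$ and eigenvalues $\alpha_k$. Since $\alpha_k=NU_k'(N)/U_k(N)$, this is the ``ultraspherical-type'' semigroup obtained as a Doob $h$-transform of the Jacobi/Chebyshev diffusion by the ground state $U_k(N)$-weighting. We would try to identify it with the radial part of Brownian motion on a sphere $S^{d}$ with $d\to$ a real parameter, or verify a Bakry--\'Emery $\Gamma_2\ge\Gamma$ curvature bound directly.
\item Passing from central functions to all of $\Pol(\OF)$. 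We would use Brannan's transference: the conditional expectation onto $\ell^\infty$-central elements together with the coproduct embedding $\Delta$, intertwining $P_t$ with $\id\otimes P_t$ (Kac type, so $\Delta$ is trace preserving). Since $L$ is a convolution (L\'evy) generator, $P_t=(\id\otimes\phi_t)\Delta$, and the LSI for the generating functional reduces to a statement about $\phi_t$ on the central algebra, via complete positivity and tensorization of Gross's inequality.
\end{enumerate}

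The main obstacle is the optimal constant in the central/classical step. The one-dimensional semigroup with eigenvalues $\alpha_k$ is not exactly a diffusion with known curvature, and a naive $\Gamma_2$ computation gives curvature only asymptotically $\ge1$. First we would compute $\Gamma_2$ explicitly using $\alpha_k$ and the three-term recurrence, and attempt to show $\Gamma_2(f)\ge\Gamma(f)$, normalized so that $\alpha_1=1$ is the spectral gap. If that fails, we would prove the $L_2\to L_4$ estimate at $t=\frac12\log3$ combinatorially, expanding $\|P_tf\|_4^4$ via the fusion rules $u^k\otimes u^l\cong\bigoplus u^{k+l-2j}$ and comparing coefficientwise with the $N=\infty$ (free semicircular, Biane) case, where the sharp bound is known. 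This comparison would exploit that $\alpha_k\ge k$, with equality in the limit, so that $P_t$ is dominated by the free Ornstein--Uhlenbeck contraction.
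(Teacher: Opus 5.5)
Your necessity argument (second-order expansion of $\1+\epsilon x$ with $x=x^*\in V_1$) is correct, and so is the reduction of sufficiency to the sharp logarithmic Sobolev inequality via Gross and Olkiewicz--Zegarlinski. Both coincide with what the paper does. The gap is that the content of the theorem is the inequality $\Ent(x^2)\le2\mathcal E(x,x)$ on all of $L_\infty(\OF)$, and your proposal does not prove it. Neither route you sketch can.

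\textbf{The central-algebra route.} The central algebra is a $P_t$-invariant proper subalgebra carrying the same $L_p$-norms. So any log-Sobolev or hypercontractive bound proved there, by $\Gamma_2$ or otherwise, only bounds $\|P_t\|_{p\to r}$ from below, i.e.\ gives a necessary condition. Nothing moves the sharp constant to non-central elements. The coproduct does not close this gap. The intertwining $\Delta P_t=(\id\otimes P_t)\Delta$ only says that $P_t$ is hypercontractive if $\id\otimes P_t$ is, and that is a stronger, vector-valued statement about $P_t$ itself. Noncommutative log-Sobolev inequalities do not tensorize automatically, so this is circular. Two further errors: $\chi_1$ is semicircular on $[-2,2]$, not $[-N,N]$, because the $U_k(\chi_1)$ are $h$-orthonormal. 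And for $N\ge3$ the $\alpha_k$ grow only linearly in $k$, so the central generator is not a diffusion.

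\textbf{The free-comparison route.} The fallback comparison with the free semicircular model fails exactly where the paper has to work. On non-central elements the product structure of $\OF$ is not dominated by the free one.
\begin{itemize}
\item The Clebsch--Gordan coefficient controlling $\|\Pi_{r+s}(ab)\|_2$ for $a\in V_r$, $b\in V_s$ is $c_{rs0}=\eta_{rs}$, with $\eta_{rs}^2=1+\qint{r}\qint{s}/\qint{r+s+1}>1$.
\item For $N=2$ this equals $(r+1)(s+1)/(r+s+1)$. On $SU(2)$ it is attained by powers of the matrix coefficient $u_{11}$ and is unbounded.
\item The interior coefficients only satisfy $c_{rst}\le\gamma_N^2\eta_{rs}\eta_{rt}\eta_{st}$, whereas all these coefficients equal $1$ in the free case.
\end{itemize}
Also, $\alpha_k\ge k$ alone gives no domination by the free Ornstein--Uhlenbeck semigroup. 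Larger eigenvalues do not imply smaller $L_p\to L_r$ norms unless the quotient multiplier $e^{-t(\alpha_k-k)}$ is known to be contractive on $L_r$. In any case, the gain in the eigenvalues must be balanced quantitatively against the loss in the structure constants.

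\textbf{A single $L_2\to L_4$ bound is not enough.} A bound at $t=\frac12\log3$ alone would not suffice. Interpolating it against $P_0=I$ yields only the log-Sobolev constant $2\log3$, and it says nothing about the other pairs $(p,r)$.

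\textbf{The missing idea.} The paper uses the cubic majorant
$2s^2\log s\le\frac{2}{3\lambda}s^3+(2\log\lambda+1)s^2-2\lambda s+\frac{\lambda^2}{3}$, applied with $\lambda=h(x)+Q$. This reduces the sharp inequality to the centered third-moment bound $|h(u^3)|\le3S^2Q+Q^3$, where $Q^2=h(u(L-I)u)$. That bound is proved by encoding the Clebsch--Gordan bounds in the matrix $K_{rs}=\gamma_N\eta_{rs}b_{r+s}$. The exact eigenvalue absorption $\gamma_N^2\sum_{r=1}^{k-1}\eta_{r,k-r}^2\le\alpha_k-1$ gives $\|K\|_{S_2}\le Q$. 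Then the boundary terms are at most $3\|K\|_{\mathrm{op}}S^2$, and the interior terms at most $\Tr(K^3)\le\|K\|_{S_2}^3$. For $N=2$ a trilinear variant of the coefficient bound is used instead.
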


For $F=I_N$, $\OF$ is the free orthogonal quantum group $O_N^+$. In this
case, the hypercontractivity problem was studied by Franz, Hong, Lemeux,
Ulrich and Zhang \cite{FHLUZ17} using ultracontractivity bounds and a
noncommutative martingale convexity inequality of Ricard and Xu \cite{RX16}.
Brannan, Vergnioux and Youn \cite{BVY21} subsequently improved these bounds
using noncommutative Khintchine inequalities derived from property RD and
Haagerup-type estimates, and they also treated general Kac-type $O_F^+$.
Neither work gives the sharp time.

Our main contribution is a common representation-theoretic proof for all
Kac-type $O_F^+$, yielding the sharp estimate \eqref{eq:hc-normalized} in the
nonclassical cases. The necessity of
$t\geq\frac12\log\frac{r-1}{p-1}$ follows from a perturbation argument; the
challenge lies in the sufficient direction. In particular, the optimal-time
problem was raised in \cite{FHLUZ17}. For $r\geq2$, let
\[
 t_{N,r}^{\mathrm{opt}}
 :=\inf\bigl\{t\geq0:
 \|P_t:L_2(O_N^+)\to L_r(O_N^+)\|\leq1\bigr\}.
\]
For each fixed $r\geq4$, in the normalization \eqref{eq:intro-alpha},
\cite[Conjecture~4.8]{BVY21} asserts that
\[
 t_{N,r}^{\mathrm{opt}}
 =\frac12\log(r-1)+\delta_{N,r},
 \qquad
 \delta_{N,r}=o(N^{-1})
 \quad(N\to\infty).
\]
Theorem~\ref{thm:hc} proves the stronger exact identity
$t_{N,r}^{\mathrm{opt}}=\frac12\log(r-1)$ for every $N\geq2$ and every
$r\geq2$.

If $N=2$ and $F\overline F=-I_2$, then
$O_F^+\cong SU(2)\cong S^3$ and $P_t=e^{(t/3)\Delta_{S^3}}$. In this case,
Theorem~\ref{thm:hc} becomes the sharp heat-semigroup
hypercontractivity theorem of Mueller and Weissler \cite{MW82}. In
particular, our main result provides a new proof of the $SU(2)$ case.

For the sufficient direction, we prove the sharp logarithmic Sobolev
inequality. Theorem~\ref{thm:hc} then follows from the tracial noncommutative
Gross theorem \cite[Theorems~3.8 and~5.5]{OZ99}, based on Gross's classical
argument \cite{Gro75}.

\begin{theorem}[Sharp logarithmic Sobolev inequality]\label{thm:lsi}
Let
\[
  \mathcal E(x,x):=\|L^{1/2}x\|_2^2,
  \qquad
  D(\mathcal E):=D(L^{1/2}).
\]
For every positive $x\in D(\mathcal E)$, one has
\begin{equation}\label{eq:lsi}
  \Ent(x^2)\leq2\mathcal E(x,x),
\end{equation}
where
\[
  \Ent(y):=h(y\log y)-h(y)\log h(y),
  \qquad y\in L_1(\OF)_+,
\]
is understood as an extended nonnegative quantity, with $0\log0=0$.
For $x\in\Pol(\OF)$ that is positive in $L_\infty(\OF)$, one has
$\mathcal E(x,x)=h(xLx)$. The constant $2$ is optimal.
\end{theorem}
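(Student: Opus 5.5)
The plan is to prove \eqref{eq:lsi} by a Bakry--Émery type semigroup argument, carried out at the level of the Hopf algebra $\Pol(\OF)$. Optimality of the constant $2$ will be handled separately by a perturbation argument. For optimality, take $x=\1+\epsilon a$ with $a=a^*$ a self-adjoint combination of the $u_{ij}$, so $a$ lies in the $k=1$ eigenspace with $La=a$ and $h(a)=0$. Expanding gives
\[
\Ent(x^2)=2\epsilon^2h(a^2)+O(\epsilon^3),\qquad \mathcal E(x,x)=\epsilon^2h(a^2).
\]
Hence $\Ent(x^2)\le c\,\mathcal E(x,x)$ forces $c\ge2$. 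This is just the statement that the LSI constant cannot beat the spectral gap $\alpha_1=1$. We also need $\alpha_k\ge\alpha_1$ for $k\ge1$, which follows from the monotonicity of $k\mapsto U_k'(N)/U_k(N)$. The identity $\mathcal E(x,x)=h(xLx)$ for positive $x\in\Pol(\OF)$ is immediate from the spectral decomposition and the fact that $L$ is diagonal on the $u^k_{ij}$.

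For sufficiency, I would first realise $L$ as a Gaussian-type generator. Restricted to the central algebra, $\chi_k=U_k(\chi_1)$, and $L$ acts on polynomials in $\chi_1$ by the second-order operator
\[
f\mapsto -\tfrac{N}{U_k(N)}\,\partial_x\text{-type expression},
\]
since $\alpha_k=N\,U_k'(N)/U_k(N)$ is exactly a derivative at the trivial point $x=N=\chi_1(e)$. Concretely, $L$ should be the derivative at $s=N$ of the convolution semigroup given by the central states $\phi_s(u^k_{ij})=\delta_{ij}U_k(s)/U_k(N)$.

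From this generating functional I would build the associated first-order differential calculus. This is a Hilbert $\Pol(\OF)$-bimodule $H$ with a derivation $\delta$ such that
\[
\Gamma(x,y):=\tfrac12\bigl(x^*Ly+(Lx)^*y-L(x^*y)\bigr)=\langle\delta x,\delta y\rangle,
\]
via the Schürmann triple of the Lévy process. I would then define
\[
\Gamma_2(x)=\tfrac12\bigl(\Gamma(x,Lx)+\Gamma(Lx,x)-L\Gamma(x)\bigr).
\]

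The key step is a noncommutative curvature--dimension inequality. In the case $SU(2)=S^3$ with $L=-\Delta/3$, the sharp constant arises from $CD(\rho,n)$ with $\rho\,n/(n-1)=1$ after normalisation, not from $CD(1,\infty)$. So I expect to need an inequality of the form
\[
h\bigl(\Gamma_2(x)\bigr)\ \ge\ \rho\,h\bigl(\Gamma(x)\bigr)+\tfrac1n\,h\bigl((Lx)^2\bigr),
\qquad \rho\tfrac{n}{n-1}=1,
\]
in the integrated and intertwined form that feeds into the standard interpolation proof of the LSI. Here the interpolation is along
\[
\Phi(s)=P_s\bigl(P_{t-s}(x^2)\log P_{t-s}(x^2)\bigr).
\]
For the quadratic part, this reduces, via the fusion rule $u^1\otimes u^k\cong u^{k-1}\oplus u^{k+1}$ and the recurrence for $U_k$, to explicit inequalities between $\alpha_{k\pm1}$ and $\alpha_k$. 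These say roughly that $\alpha_{k+1}-\alpha_k$ grows at least like the $S^3$ case. I would verify them using $U_k(N)=\sinh((k+1)\theta)/\sinh\theta$ with $\cosh\theta=N/2$.

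The main obstacle is that the LSI is nonlinear, so the $\Gamma_2$ estimate must hold for the entropy functional and not just spectrally. Noncommutatively, there is no chain rule for $\log$. My first attempt would be to avoid the chain rule by working with $x^2$ via the Lieb-concavity/operator-convexity route of the tracial Gross theorem \cite{OZ99}. I would test the inequality on the tensor structure $\Pol(\OF)\subset$ matrices over the central algebra, using the comultiplication to transfer the problem. If that fails, the fallback is a transference argument: use monoidal equivalence of $\OF$ with $SU_q(2)$ for the appropriate $q$ (with $q+q^{-1}=N$) to pull back a sharp LSI established for the central/classical part. Then extend from central to arbitrary elements by averaging with the adjoint coaction, which commutes with $P_t$.
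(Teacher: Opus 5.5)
\textbf{There is a genuine gap in the sufficiency direction.} Your optimality argument and the identity $\mathcal E(x,x)=h(xLx)$ are fine and match the paper's perturbation $x_\varepsilon=\1+\varepsilon v$ with $v\in V_1$.

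The curvature--dimension step carries the whole proof, and it is never established. In the integrated form you write, it is empty. Since $h\circ L=0$, one has $h(\Gamma(x))=\langle x,Lx\rangle$ and $h(\Gamma_2(x))=\|Lx\|_2^2$. On $V_k$ the inequality therefore reads $(1-\frac1n)\alpha_k\geq\rho$, which is exactly the spectral gap $\alpha_1=1$, and a spectral gap never implies an LSI.

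The interpolation along $\Phi(s)$ needs two things: a pointwise (operator-valued) $\Gamma_2$ bound, and a diffusion chain rule $\Gamma(\phi(f),g)=\phi'(f)\Gamma(f,g)$. For $N\geq3$ this fails even on the commutative subalgebra generated by $\chi_1$. There $L$ has eigenfunctions $U_k(\chi_1)$ with eigenvalues $\alpha_k=NU_k'(N)/U_k(N)\sim Nk/\sqrt{N^2-4}$, which grow linearly rather than quadratically in $k$. So $L$ is not a second-order differential operator; it is a nonlocal generator of word-length type. Your $CD(\frac23,3)$ heuristic is valid only for $N=2$, and even then only in the classical branch $F\overline F=-I_2$, where $O_F^+\cong SU(2)$. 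Without a chain rule, the known noncommutative Bakry--\'Emery arguments give at best modified (entropy-decay) log-Sobolev inequalities. These do not imply the Gross form $\Ent(x^2)\leq2\mathcal E(x,x)$ with constant $2$.

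\textbf{Your fallbacks do not close the gap.}
\begin{itemize}
\item The tracial Gross theorem of \cite{OZ99} turns an LSI into hypercontractivity; it does not produce one.
\item Monoidal equivalence with $SU_q(2)$ preserves fusion rules, but not the trace, $L_p$-norms for $p\neq2$, or entropy. Moreover, $SU_q(2)$ with $0<q<1$ is not of Kac type, and there is no sharp LSI there to pull back.
\item Averaging with the adjoint coaction projects onto class functions. An LSI for central elements gives no control of $\Ent(x^2)$ for general $x$; take any $x$ whose central part is constant.
\end{itemize}

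\textbf{The missing idea.} The paper avoids semigroup calculus altogether. Write $x=h(x)\1+u$ and apply the cubic majorant of Lemma~\ref{lem:cubic} with $\lambda=h(x)+Q$. This reduces the LSI to the third-moment bound \eqref{eq:third-moment-intro}, $|h(u^3)|\leq3S^2Q+Q^3$, where $Q^2=h(u(L-I)u)$. That bound is proved by representation theory, using:
\begin{itemize}
\item the Clebsch--Gordan block estimate;
\item the factorization $c_{rst}\leq\gamma_N^3\eta_{rs}\eta_{rt}\eta_{st}$ for $N\geq3$, or the trilinear bound for $N=2$;
\item the eigenvalue absorption $\gamma_N^2\sum_r\eta_{r,k-r}^2\leq\alpha_k-1$;
\item the Hankel-type matrix $K$, through $\Tr(K^3)\leq\|K\|_{S_2}^3$ and $\langle K\mathbf b,\mathbf b\rangle\leq\|K\|_{\mathrm{op}}\|\mathbf b\|_2^2$.
\end{itemize}
You also omit the passage from positive polynomials to all positive $x\in D(\mathcal E)$. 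This needs positive polynomial approximants and lower semicontinuity of entropy, as in Appendix~\ref{app:lsi-closure}.
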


The proof of this sharp log-Sobolev inequality is inspired by the recent work of Frank--Ivanisvili for finite cycles \cite{FI26}, and by Xie--Zhang for cyclic groups \cite{XZC26} and free group von Neumann algebras \cite{XZ26}. 

The scalar input is the following
\begin{equation}\label{eq:intro-cubic}
  2s^2\log s
  =\min_{\lambda>0}\left\{
  \frac{2}{3\lambda}s^3+(2\log\lambda+1)s^2
  -2\lambda s+\frac{\lambda^2}{3}\right\},
  \qquad s>0,
\end{equation}
which follows by direct computation. This reduces the difficult part to a
third-moment estimate, where representation theory enters.

For $N\geq2$, let
\[
  V_k=\operatorname{span}\{u_{ij}^k\}\subset L_2(\OF),
\]
so that $L\vert_{V_k}=\alpha_k I_{V_k}$. For $u=u^*\in\Pol(\OF)$ with $h(u)=0$, define $S,Q\geq0$ by
\[
  S^2=h(u^2),
  \qquad
  Q^2=h\bigl(u(L-I)u\bigr).
\]
The desired third-moment estimate is
\begin{equation}\label{eq:third-moment-intro}
  |h(u^3)|\leq3S^2Q+Q^3.
\end{equation}
For $N\geq3$, we prove it using the Clebsch--Gordan block estimate
\eqref{eq:block}. If
$(i,j,k)=(s+t,r+t,r+s)$ is admissible, then
\begin{equation}\label{eq:intro-block}
     \|\Pi_k(ab)\|_2\leq c_{rst}\|a\|_2\|b\|_2,
  \qquad a\in V_i,\ b\in V_j, 
\end{equation}
where $c_{rst}$ is the Clebsch--Gordan block coefficient. To deal with these additional coefficients, which equal one in the free group case, we need further analysis. For $N\geq3$ the
coefficients satisfy
\[
  c_{rs0}=\eta_{rs},
  \qquad
  c_{rst}\leq\gamma_N^3\eta_{rs}\eta_{rt}\eta_{st}
  \quad(r,s,t\geq1),
\]
with
\[
  \eta_{rs}=\left(\frac{d_rd_s}{d_{r+s}}\right)^{1/2},
  \qquad
  \gamma_N=\left(1+\frac1{N^2}\right)^{1/2}.
\]
The endpoint term $c_{rs0}=\eta_{rs}$ has no $\gamma_N$ loss.  The interior
loss is absorbed by the eigenvalue estimate for the normalized heat eigenvalues
from \eqref{eq:L-alpha}:
\[
  \gamma_N^2\sum_{r=1}^{k-1}\eta_{r,k-r}^2\leq\alpha_k-1.
\]
Combined, these estimates give \eqref{eq:third-moment-intro}. In the final logarithmic Sobolev argument, for $x=h(x)\1+u$, we choose the parameter $\lambda=h(x)+Q$.

When $N=2$, the uniform interior factorization estimate for $c_{rst}$
degenerates. For the cubic expression, we instead exploit traciality and
self-adjointness. Cycling the factors and applying
Hilbert--Schmidt contractivity to the normalized Clebsch--Gordan isometry gives,
for self-adjoint $a\in V_i$, $b\in V_j$, and $c\in V_k$,
\[
 |h(abc)|
 \leq\widetilde c_{rst}\|a\|_2\|b\|_2\|c\|_2,
 \qquad
 \widetilde c_{rst}
 =\min\left\{
 \sqrt{\frac{d_jd_k}{d_i}},
 \sqrt{\frac{d_id_k}{d_j}},
 \sqrt{\frac{d_id_j}{d_k}}
 \right\}.
\]
When $N=2$, this symmetric trilinear coefficient satisfies
\[
 \widetilde c_{rs0}=\eta_{rs},
 \qquad
 \widetilde c_{rst}
 \leq\gamma_2^3\eta_{rs}\eta_{rt}\eta_{st}
 \quad(r,s,t\geq1),
 \qquad
 \gamma_2=\frac{\sqrt5}{2}.
\]
The explicit eigenvalues $\alpha_k=k(k+2)/3$ absorb this loss, so the same
matrix argument proves \eqref{eq:third-moment-intro} for every Kac-type
$O_F^+$ with $N=2$.

Section~\ref{sect2} recalls background and basic estimates for $O_F^+$.
Section~\ref{sect3} proves the third-moment estimate for $N\geq3$, and
Section~\ref{sect:N2} deals with the $N=2$ case. In Section~\ref{sect4} we
prove the polynomial logarithmic Sobolev inequality and its sharpness. The
appendices record the standard passage to the full form domain and an
alternative proof in the case $N=2$.

\subsection*{Acknowledgments}
H.Z. is supported by NSF DMS-2453408. 
The author acknowledges the use of ChatGPT 5.5 Pro. 

\section{Preliminaries}\label{sect2}

\subsection{Compact quantum groups}

A compact quantum group is a pair $\mathbb G=(C(\mathbb G),\Delta)$, where
$C(\mathbb G)$ is a unital $C^*$-algebra and
\[
  \Delta:C(\mathbb G)\longrightarrow C(\mathbb G)\otimes_{\min}C(\mathbb G)
\]
is a coassociative unital $*$-homomorphism satisfying the cancellation
conditions
\[
  \overline{\Delta(C(\mathbb G))(1\otimes C(\mathbb G))}
  =C(\mathbb G)\otimes_{\min}C(\mathbb G)
  =\overline{\Delta(C(\mathbb G))(C(\mathbb G)\otimes1)}.
\]
It has a unique Haar state $h$, characterized by
\[
  (h\otimes\id)\Delta(a)=h(a)\1=(\id\otimes h)\Delta(a),
  \qquad a\in C(\mathbb G).
\]
Passing to the GNS representation of $h$ gives the reduced $C^\ast$-algebra
$C_r(\mathbb G)$ and the von Neumann algebra
$L_\infty(\mathbb G)=C_r(\mathbb G)''$, on which the Haar state $h$ is faithful. The compact quantum group is of Kac
type when $h$ is a trace, and we make this assumption throughout. For more about compact quantum groups, we refer to \cite{Wor87,Wor98,NT13}.

For $1\le p<\infty$, define
\[
        \|x\|_p:=h(|x|^p)^{1/p},
\]
and $L_p(\mathbb G)$ denotes the corresponding noncommutative $L_p$-space. We refer to \cite{PX03} for more background on noncommutative $L_p$-spaces.

A finite-dimensional unitary representation of $\mathbb G$ on a Hilbert space
$H_\alpha$ is a unitary
$u^\alpha=(u^\alpha_{ij})\in B(H_\alpha)\otimes C(\mathbb G)$ satisfying
$(\id\otimes\Delta)(u^\alpha)=u^\alpha_{12}u^\alpha_{13}$.  The coefficients
of irreducible representations span the Hopf $*$-algebra
$\Pol(\mathbb G)$, which is dense in $L_p(\mathbb G)$ for every
$1\leq p<\infty$.

\subsection{Free orthogonal quantum groups \texorpdfstring{$O_F^+$}{O(F)+}}

Let $N\ge2$ and let $F\in GL_N(\C)$ satisfy \eqref{eq:F-hyp-intro}. The compact quantum group $\OF$ is the free orthogonal quantum group of Van Daele and Wang \cite{VDW96}. The algebra $C(\OF)$ is the universal $C^*$-algebra generated by the entries of a unitary matrix $u=(u_{ij})\in M_N(C(\OF))$ satisfying
\begin{equation}\label{eq:OF-relation}
        u=F\overline uF^{-1},
        \qquad \overline u=(u_{ij}^*).
\end{equation}
The coproduct is determined by
\[
        \Delta(u_{ij})=\sum_{k=1}^N u_{ik}\otimes u_{kj}.
\]
We denote by $\Pol(\OF)\subset C(\OF)$ the Hopf $*$-algebra generated by the coefficients $u_{ij}$. Under the normalization $F\overline F=\pm I_N$, the compact quantum group $\OF$ is of Kac type exactly when $F$ is unitary \cite[Section~2]{BVY21}. Hence the Haar state $h$ is tracial.

The irreducible representations of $\OF$ are indexed by $k\in\Nzero$ \cite{Ban96}.  Let
\[
        u^k=(u_{ij}^k)\in B(H_k)\otimes C(\OF)
\]
be the irreducible representation of length $k$, with $H_0=\C$, $H_1=\C^N$, $u^0=\1$ and $u^1=u$. 
We denote the classical (and quantum) dimension by
\[
        d_k=\dim H_k.
\]
The fusion rules are
\begin{equation}\label{eq:fusion}
        u^i\otimes u^j
        \simeq
        u^{|i-j|}\oplus u^{|i-j|+2}\oplus\cdots\oplus u^{i+j}.
\end{equation}
In particular, the fusion is multiplicity-free.

The dimensions satisfy
\[
        d_0=1,\qquad d_1=N,\qquad d_{k+1}=Nd_k-d_{k-1}.
\]
Let $(U_k)_{k\geq0}$ be the monic Chebyshev polynomials of the second kind, characterized by $U_k(2\cos\theta)=\sin((k+1)\theta)/\sin\theta$, or equivalently by
\begin{equation}\label{eq:cheb}
        U_0(x)=1,\qquad U_1(x)=x,\qquad 
        U_{k+1}(x)=xU_k(x)-U_{k-1}(x).
\end{equation}
Then $d_k=U_k(N)$. For $N>2$, let $q\in(0,1)$ be determined by
\begin{equation}\label{eq:qparameter}
        N=q+q^{-1}.
\end{equation}
When $N=2$, set $q=1$ and interpret the quantum integers below by
continuity.  Thus
\[
        [m]_q=\frac{q^{-m}-q^m}{q^{-1}-q}
        \quad(q\neq1),
        \qquad
        [m]_q=m
        \quad(q=1).
\]
In the sequel we write $[m]$ for $[m]_q$ and $[m]!$ for
$[m][m-1]\cdots[1]$, with $[0]!=1$.  Then
\begin{equation}\label{eq:dimension-q}
        d_k=\qint{k+1}.
\end{equation}

Let
\[
        V_k=\operatorname{span}\{u_{ij}^k:1\le i,j\le d_k\}
    \subset L_2(\OF).
\]
We have the orthogonality relations 
\begin{equation}\label{eq:schur}
        h\bigl((u_{ij}^{k})^*u_{mn}^{\ell}\bigr)
        =\delta_{k\ell}\delta_{im}\delta_{jn}\,d_k^{-1},
\end{equation}
and the orthogonal decomposition 
\begin{equation}\label{eq:pw}
        L_2(\OF)=\bigoplus_{k\ge0}V_k.
\end{equation}
We denote by $\Pi_k$ the orthogonal projection onto $V_k$. Every $V_k$ is invariant under the involution, although the individual coefficients $u_{ij}^k$ need not be self-adjoint.

For $X\in B(H_k)$, define
\begin{equation}\label{eq:fourier-block}
        \Fou_k(X)=\sum_{i,j=1}^{d_k}X_{ji}u_{ij}^k.
\end{equation}
Then
\begin{equation}\label{eq:fourier-isometry}
        \|\Fou_k(X)\|_2=d_k^{-1/2}\|X\|_{S_2}.
\end{equation}
Suppose that $u^k$ occurs in $u^i\otimes u^j$. Since the fusion is
multiplicity-free, choose an isometric intertwiner
\[
        v_k^{i,j}:H_k\longrightarrow H_i\otimes H_j
\]
which is unique up to a scalar of modulus one. We shall use the following
standard product-block identity.

\begin{equation}\label{eq:product-block}
        \Pi_k\bigl(\Fou_i(X)\Fou_j(Y)\bigr)
        =\Fou_k\bigl((v_k^{i,j})^*(X\otimes Y)v_k^{i,j}\bigr).
\end{equation}
The right-hand side is independent of the phase chosen for $v_k^{i,j}$.

We call $(i,j,k)\in\Nzero^3$ admissible if
\[
        |i-j|\le k\le i+j,
        \qquad
        i+j+k\in2\Nzero.
\]
For an admissible triple set
\[
 r=\frac{j+k-i}{2},\qquad
 s=\frac{i+k-j}{2},\qquad
 t=\frac{i+j-k}{2}.
\]
Then $r,s,t\in\Nzero$ and
\begin{equation}\label{eq:param}
        (i,j,k)=(s+t,r+t,r+s).
\end{equation}
Thus $(i,j,k)$ are the three representation labels, while $(r,s,t)$ are
the corresponding Temperley--Lieb strand parameters.

\subsection{Clebsch--Gordan block coefficients}

For an admissible $(i,j,k)$, with $(r,s,t)$ as in \eqref{eq:param}, define
the theta function \cite[Section~3.4]{BCLY20} by
\begin{equation}\label{eq:theta}
 \Theta_{rst}
 =\frac{\qfact{r+s+t+1}\qfact r\qfact s\qfact t}
 {\qfact{r+s}\qfact{r+t}\qfact{s+t}}.
\end{equation}
The following lemma follows from
\cite[Equation~(3.9) in the proof of Proposition~3.4]{BVY21}, together with
the isometric normalization in \cite[Section~3.4]{BCLY20}. The same argument
applies at the endpoint $q=1$.

\begin{lemma}\label{lem:compression}
Let $(i,j,k)$ be admissible, with associated parameters $(r,s,t)$ as in
\eqref{eq:param}. Then, for $X\in B(H_i)$ and $Y\in B(H_j)$,
\begin{equation}\label{eq:compression}
 \|(v_k^{i,j})^*(X\otimes Y)v_k^{i,j}\|_{S_2(H_k)}
 \leq\frac{d_k}{\Theta_{rst}}
 \|X\|_{S_2(H_i)}\|Y\|_{S_2(H_j)}.
\end{equation}
\end{lemma}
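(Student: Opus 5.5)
We realize the isometric intertwiner $v_k^{i,j}$ diagrammatically through Temperley--Lieb calculus and Jones--Wenzl projections. We then estimate the compressed operator $(v_k^{i,j})^*(X\otimes Y)v_k^{i,j}$ by writing it as a partial trace.

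\textbf{Step 1: the intertwiner.} Following \cite[Section~3.4]{BCLY20}, we identify $H_m$ with the range of the Jones--Wenzl projection $p_m$ acting on $(\C^N)^{\otimes m}$. Up to a phase, the intertwiner is
\[
 v_k^{i,j}=\Theta_{rst}^{-1/2}\,(d_k)^{1/2}\,(p_i\otimes p_j)(\1_{r}\otimes \cup_t\otimes \1_s)p_k ,
\]
where $\cup_t$ denotes $t$ nested caps, built from $F$ so that the construction is equivariant. The normalization is fixed by the theta-net evaluation
\[
 \Tr\bigl(p_k w^*w\bigr)=\Theta_{rst}
\]
for the unnormalized map $w=(p_i\otimes p_j)(\1_r\otimes\cup_t\otimes\1_s)p_k$. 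Schur's lemma gives $w^*w=(\Theta_{rst}/d_k)\,p_k$, and hence the factor $(d_k/\Theta_{rst})^{1/2}$. At $q=1$ all of these formulas hold by continuity.

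\textbf{Step 2: the partial-trace form.} Substituting, we obtain
\[
 (v_k^{i,j})^*(X\otimes Y)v_k^{i,j}=\frac{d_k}{\Theta_{rst}}\,p_k\bigl(X'\ast Y'\bigr)p_k .
\]
Here $X'=p_iXp_i$ and $Y'=p_jYp_j$, viewed on $r+t$ strands and $t+s$ strands, and $X'\ast Y'$ is the operator on $(\C^N)^{\otimes(r+s)}$ obtained by contracting the last $t$ strands of $X'$ against the first $t$ strands of $Y'$ via caps and cups.

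The contraction is a composition of an $(r,t)$-tensor with a $(t,s)$-tensor. In matrix form it is a product of reshaped matrices: reshape $X$ as a map $\tilde X$ from $(\C^N)^{\otimes t}$ to $(\C^N)^{\otimes 2r}$, and $Y$ similarly as $\tilde Y$. The contraction then reads
\[
 X'\ast Y'=\Phi\bigl(\tilde X\,\widehat{\tilde Y}\bigr),
\]
where $\widehat{\ \cdot\ }$ is the transpose twisted by the unitary $F$, and $\Phi$ is a reshuffling of tensor legs. Reshuffling preserves the Hilbert--Schmidt norm, and because $F$ is unitary (the Kac assumption) the twist is unitary as well.

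Consequently
\[
 \|X'\ast Y'\|_{S_2}\le\|\tilde X\|_{S_\infty}\|\tilde Y\|_{S_2}\le \|X\|_{S_2}\|Y\|_{S_2}.
\]
Compressing by $p_k$ does not increase the $S_2$-norm. This proves \eqref{eq:compression}.

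\textbf{Main obstacle.} The delicate step is the bookkeeping in Step 2: checking that the diagrammatic contraction really is a norm-preserving reshuffle composed with a matrix product. This relies on $F$ being unitary, so that caps and cups are isometric up to the correct normalization. For nonunitary $F$ one would pick up factors of $\|F\|$, so this is exactly where the Kac hypothesis enters.

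Our understanding is that this is the content of \cite[Equation~(3.9)]{BVY21}. We would cross-check the constant in the special case $t=0$. There $v$ is the inclusion $p_k\le p_i\otimes p_j$ with $\Theta_{rs0}=d_{r+s}=d_k$, and the bound reduces to
\[
 \|p_k(X\otimes Y)p_k\|_{S_2}\le\|X\|_{S_2}\|Y\|_{S_2},
\]
which is immediate.
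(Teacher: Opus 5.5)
Your proposal is correct and follows essentially the same route as the paper, which simply cites it: realize $v_k^{i,j}$ as the $\Theta_{rst}$-normalized Temperley--Lieb intertwiner of \cite{BCLY20}, then bound the contracted product $X'\ast Y'$ in Hilbert--Schmidt norm, which is the content of \cite[Equation~(3.9)]{BVY21}. There are two harmless slips: the reshaped $\tilde X$ should map $(\C^N)^{\otimes 2t}$ (not $(\C^N)^{\otimes t}$) into $(\C^N)^{\otimes 2r}$, and your $i$ carries $r+t$ strands rather than the paper's $s+t$, which does not matter since $\Theta_{rst}$ is symmetric.
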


Combining \eqref{eq:fourier-isometry}, \eqref{eq:product-block} and
Lemma~\ref{lem:compression} gives the Clebsch--Gordan estimate used
throughout the paper.

\begin{proposition}[Clebsch--Gordan block coefficient estimate]\label{prop:block}
Let $(i,j,k)=(s+t,r+t,r+s)$ be admissible. For $a\in V_i$ and $b\in V_j$,
\begin{equation}\label{eq:block}
  \|\Pi_k(ab)\|_2\leq c_{rst}\|a\|_2\|b\|_2,
\end{equation}
where the Clebsch--Gordan block coefficient is
\begin{equation}\label{eq:c-rst}
  c_{rst}=\frac{\sqrt{d_{r+s}d_{r+t}d_{s+t}}}{\Theta_{rst}}.
\end{equation}
The coefficient $c_{rst}$ is symmetric in $r,s,t$ and is independent of $F$ within the Kac class of fixed matrix size $N$.
\end{proposition}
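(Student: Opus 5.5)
Write $a=\Fou_i(X)$ and $b=\Fou_j(Y)$ with $X\in B(H_i)$ and $Y\in B(H_j)$. This is possible because, by \eqref{eq:fourier-block}, $\Fou_i$ is a linear bijection from $B(H_i)$ onto $V_i$: the coefficients $u^i_{mn}$ are linearly independent by \eqref{eq:schur}. By the product-block identity \eqref{eq:product-block},
\[
\Pi_k(ab)=\Fou_k\bigl((v_k^{i,j})^*(X\otimes Y)v_k^{i,j}\bigr).
\]
Applying \eqref{eq:fourier-isometry} at level $k$ and then Lemma~\ref{lem:compression} gives
\[
\|\Pi_k(ab)\|_2=d_k^{-1/2}\|(v_k^{i,j})^*(X\otimes Y)v_k^{i,j}\|_{S_2}
\le d_k^{-1/2}\frac{d_k}{\Theta_{rst}}\|X\|_{S_2}\|Y\|_{S_2}.
\]
By \eqref{eq:fourier-isometry} at levels $i$ and $j$ we have $\|X\|_{S_2}=d_i^{1/2}\|a\|_2$ and $\|Y\|_{S_2}=d_j^{1/2}\|b\|_2$. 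Hence the constant is $\sqrt{d_id_jd_k}/\Theta_{rst}$. Substituting $(i,j,k)=(s+t,r+t,r+s)$ from \eqref{eq:param} turns this into exactly $c_{rst}$ as in \eqref{eq:c-rst}.

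For the symmetry claim, note that the numerator $\sqrt{d_{r+s}d_{r+t}d_{s+t}}$ is visibly symmetric in $r,s,t$. Formula \eqref{eq:theta} is also symmetric: $\qfact{r+s+t+1}$ and $\qfact r\qfact s\qfact t$ are symmetric, and the denominator runs over all three pairs. For independence of $F$, observe that $d_k=U_k(N)=\qint{k+1}$ and that $q$ is fixed by $N=q+q^{-1}$, or $q=1$ when $N=2$. So every quantity in \eqref{eq:c-rst} depends only on $N$ and the labels.

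The only point requiring care is the input from Lemma~\ref{lem:compression}, which is assumed. Beyond that, one must check that the normalizations match. Specifically, \eqref{eq:fourier-isometry} carries the factor $d^{-1/2}$, which is consistent with \eqref{eq:schur} and the fact that $\Tr(X^*X)=\sum|X_{ji}|^2$. One must also check that \eqref{eq:product-block} uses the same isometric $v_k^{i,j}$ as the lemma. Once these are confirmed, the estimate is a direct chain of equalities and a single inequality, and I expect no genuine obstacle.
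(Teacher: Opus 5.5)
Your proposal is correct and follows the paper's proof essentially verbatim. It runs the same chain: \eqref{eq:product-block}, then \eqref{eq:fourier-isometry} at levels $k$, $i$, $j$, then Lemma~\ref{lem:compression}, yielding the constant $\sqrt{d_id_jd_k}/\Theta_{rst}$, and it reads off symmetry from \eqref{eq:theta}. Your remark that $d_k=\qint{k+1}$ and $q$ depend only on $N$ usefully makes explicit the independence of $F$, which the paper leaves implicit.
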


\begin{proof}
Write $a=\Fou_i(X)$ and $b=\Fou_j(Y)$. By \eqref{eq:product-block} and
Lemma~\ref{lem:compression},
\begin{align*}
  \|\Pi_k(ab)\|_2
  &=d_k^{-1/2}\|(v_k^{i,j})^*(X\otimes Y)v_k^{i,j}\|_{S_2}\\
  &\leq d_k^{-1/2}\frac{d_k}{\Theta_{rst}}
  \|X\|_{S_2}\|Y\|_{S_2}\\
  &=\frac{\sqrt{d_id_jd_k}}{\Theta_{rst}}\|a\|_2\|b\|_2.
\end{align*}
This is \eqref{eq:block}. Symmetry follows from \eqref{eq:theta} and \eqref{eq:c-rst}.
\end{proof}

\subsection{The normalized heat semigroup}
Recall that $\OF$ is of Kac type. The heat semigroups on $O_N^+$ considered here were introduced in \cite{CFK14}; see also \cite{FHLUZ17}. For the generalized heat semigroups on $O_F^+$, we refer to \cite{BVY21}. The generator used in \cite{BVY21} has eigenvalues $U_k'(N)/U_k(N)$. We rescale that generator by $N$ and write $P_t=e^{-tL}$; thus
\begin{equation}\label{eq:L-alpha}
        L\vert_{V_k}=\alpha_k I_{V_k},
        \qquad
        \alpha_k=N\frac{U_k'(N)}{U_k(N)}.
\end{equation}
Equivalently,
\[
        P_t(u_{ij}^k)=e^{-t\alpha_k}u_{ij}^k.
\]
The semigroup $P_t$ is normal, unital, completely positive, $h$-preserving and
symmetric on $L_2(\OF)$. In particular, 
\begin{equation}\label{eq:alpha01}
        \alpha_0=0,
        \qquad
        \alpha_1=1.
\end{equation}
\begin{lemma}\label{lem:alpha-lower}
For every $k\ge1$,
\begin{equation}\label{eq:alpha-lower}
        \alpha_k\ge k.
\end{equation}
\end{lemma}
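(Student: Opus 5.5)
We need $\alpha_k=N U_k'(N)/U_k(N)\ge k$ for $k\ge1$. Write $x=N\ge2$. The plan is to use the factorization of $U_k$ into linear factors. Since $U_k(2\cos\theta)=\sin((k+1)\theta)/\sin\theta$ and $U_k$ is monic of degree $k$, its roots are $x_m=2\cos(m\pi/(k+1))$, $m=1,\dots,k$. All of them lie in $(-2,2)$, so for $x\ge2$ we have
\[
U_k(x)=\prod_{m=1}^k (x-x_m)>0 .
\]
Taking the logarithmic derivative gives
\[
\alpha_k=x\frac{U_k'(x)}{U_k(x)}=\sum_{m=1}^k\frac{x}{x-x_m}.
\]

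Next, the roots are symmetric: $x_{k+1-m}=-x_m$. So I will pair the term for $m$ with the term for $k+1-m$. For a root $y=x_m$ with $|y|<2\le x$, this gives
\[
\frac{x}{x-y}+\frac{x}{x+y}=\frac{2x^2}{x^2-y^2}\ge2 .
\]
Summing over the pairs gives at least $k$ in total. When $k$ is odd, the middle root is $x_{(k+1)/2}=0$ and it contributes exactly $1$, which is consistent with the bound. Hence $\alpha_k\ge k$, with equality only for $k=1$ (where $\alpha_1=1$, matching \eqref{eq:alpha01}).

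An alternative, if one prefers avoiding roots, is the convexity bound $\sum_m \frac{x}{x-x_m}\ge \frac{k^2x}{\sum_m (x-x_m)}=\frac{k^2x}{kx}=k$. This is Cauchy--Schwarz/Jensen applied to the positive numbers $x-x_m$, using $\sum_m x_m=0$, which holds because the coefficient of $x^{k-1}$ in $U_k$ vanishes by parity of the recursion \eqref{eq:cheb}. This is even cleaner, and I would present it.

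The only step needing care is justifying the root description and the positivity $x-x_m>0$ for $x=N\ge2$. Both follow directly from the trigonometric characterization of $U_k$ stated before \eqref{eq:cheb}, and this also covers the endpoint $N=2$, since all roots are strictly less than $2$.
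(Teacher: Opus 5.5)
Your proof is correct and is essentially the paper's argument: factor $U_k$ over its roots $x_m=2\cos(m\pi/(k+1))$, take the logarithmic derivative, and apply Cauchy--Schwarz using $\sum_m x_m=0$. Your pairing of $x_m$ with $-x_m$ is an equally good variant, and it also shows the inequality is strict for $k\geq2$. The one difference is that you run the same argument at $N=2$, which is legitimate because every root satisfies $x_m<2$ strictly. The paper instead handles $N=2$ separately by computing $U_k(2)=k+1$ and $U_k'(2)=k(k+1)(k+2)/6$; this detour is not needed for the lemma, but it produces the explicit formula $\alpha_k=k(k+2)/3$ in \eqref{eq:alpha-N2}, which the paper uses later in Section~\ref{sect:N2}.
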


\begin{proof}
Suppose first that $N>2$.  The roots of $U_k$ are
\[
        x_\ell=2\cos\frac{\ell\pi}{k+1}\le 2,
        \qquad 1\le\ell\le k,
\]
and $\sum_{\ell=1}^k x_\ell=0$. Hence
\begin{equation}\label{eq:root}
         \frac{U_k'(N)}{U_k(N)}
        =\sum_{\ell=1}^k\frac1{N-x_\ell}
        \ge \frac{k^2}{\sum_{\ell=1}^k(N-x_\ell)}
        =\frac{k}{N}  
\end{equation}
by Cauchy--Schwarz.

For $N=2$, the identity
\[
        U_k(2\cos\theta)=\frac{\sin((k+1)\theta)}{\sin\theta}
\]
can be differentiated at the endpoint through its Taylor expansion. As
$\theta\to0$,
\begin{align*}
 \frac{\sin((k+1)\theta)}{\sin\theta}
 &=(k+1)\left(1-\frac{k(k+2)}6\theta^2+O(\theta^4)\right),\\
 2\cos\theta&=2-\theta^2+O(\theta^4).
\end{align*}
On the other hand,
\[
 U_k(2\cos\theta)
 =U_k(2)+U_k'(2)(2\cos\theta-2)
   +O\bigl((2\cos\theta-2)^2\bigr).
\]
Comparing the constant and quadratic terms gives
\[
        U_k(2)=k+1,
        \qquad
        U_k'(2)=\frac{k(k+1)(k+2)}6.
\]
Consequently
\begin{equation}\label{eq:alpha-N2}
        \alpha_k=2\frac{U_k'(2)}{U_k(2)}
        =\frac{k(k+2)}3\geq k.
\end{equation}
This proves \eqref{eq:alpha-lower} for all $N\geq2$.
\end{proof}

Together with \eqref{eq:alpha01}, Lemma~\ref{lem:alpha-lower} shows that the spectral gap of $L$ is one.

\section{The third-moment estimate for \texorpdfstring{$N\geq3$}{N >= 3}}\label{sect3}

Throughout this section, we assume $N\geq3$ and prove the third-moment estimate \eqref{eq:third-moment-intro}.

For $r,s\geq1$, define
\begin{equation}\label{eq:eta-gamma}
  \eta_{rs}=\left(\frac{d_rd_s}{d_{r+s}}\right)^{1/2},
  \qquad
  \gamma_N=\left(1+\frac1{N^2}\right)^{1/2}.
\end{equation}

\begin{lemma}\label{lem:factor}
For $r,s\geq1$,
\begin{equation}\label{eq:boundary-weight}
  c_{rs0}=\eta_{rs}.
\end{equation}
For $r,s,t\geq1$,
\begin{equation}\label{eq:interior-weight}
  c_{rst}\leq\gamma_N^2\eta_{rs}\eta_{rt}\eta_{st}
  \le \gamma_N^3\eta_{rs}\eta_{rt}\eta_{st}.
\end{equation}
\end{lemma}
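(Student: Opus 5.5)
The plan is to reduce both claims to identities and inequalities among the quantum integers $[m]$ of \eqref{eq:dimension-q}, using the explicit theta function \eqref{eq:theta} and the formula \eqref{eq:c-rst} for $c_{rst}$.

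\emph{Boundary case.} For $t=0$, \eqref{eq:theta} collapses to
\[
\Theta_{rs0}=\frac{[r+s+1]!\,[r]!\,[s]!}{[r+s]!\,[r]!\,[s]!}=[r+s+1]=d_{r+s}.
\]
Hence
\[
c_{rs0}=\frac{\sqrt{d_{r+s}d_rd_s}}{d_{r+s}}=\left(\frac{d_rd_s}{d_{r+s}}\right)^{1/2}=\eta_{rs},
\]
which is \eqref{eq:boundary-weight}.

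\emph{Interior case, reduction.} For $r,s,t\ge1$ we have
\[
\eta_{rs}\eta_{rt}\eta_{st}=\frac{d_rd_sd_t}{\sqrt{d_{r+s}d_{r+t}d_{s+t}}},
\]
so it suffices to bound the ratio
\[
R_{rst}:=\frac{c_{rst}}{\eta_{rs}\eta_{rt}\eta_{st}}=\frac{d_{r+s}d_{r+t}d_{s+t}}{\Theta_{rst}\,d_rd_sd_t}.
\]
Using $d_m=[m+1]$ together with $d_m[m]!=[m+1]!$, this becomes the purely factorial expression
\[
R_{rst}=\frac{[r+s+1]!\,[r+t+1]!\,[s+t+1]!}{[r+s+t+1]!\,[r+1]!\,[s+1]!\,[t+1]!}.
\]
I will rewrite this in terms of $Q:=q^2\in(0,1)$ via $[m]=q^{1-m}(1-Q^m)/(1-Q)$. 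I expect the powers of $q$ to cancel; I will check this by the quadratic bookkeeping of exponents, and the sanity check $r=s=t=1$ gives $[3]^2/([4][2])$ consistently. Telescoping the $Q$-Pochhammer symbols over $j=1,\dots,t$ should then give
\[
R_{rst}=\prod_{j=1}^{t}\frac{(1-x_j)(1-y_j)}{(1-z_j)(1-w_j)},
\qquad
x_j=Q^{r+1+j},\ y_j=Q^{s+1+j},\ z_j=Q^{j+1},\ w_j=Q^{r+s+1+j}.
\]
Here $x_jy_j=z_jw_j$ and $x_j,y_j\le z_j$.

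\emph{Interior case, estimate.} Because $x_jy_j=z_jw_j$, each factor equals
\[
1+\frac{(z_j-x_j)(z_j-y_j)}{z_j(1-z_j)(1-w_j)}\le 1+\frac{z_j}{(1-z_j)(1-w_j)}\le 1+\frac{Q^{j+1}}{(1-Q^2)(1-Q^4)}.
\]
Consequently
\[
R_{rst}\le\exp\Bigl(\frac{Q^2}{(1-Q)(1-Q^2)(1-Q^4)}\Bigr).
\]
This right-hand side is increasing in $Q$, i.e.\ decreasing in $N$. It must be compared with
\[
\gamma_N^2=1+\frac1{N^2}=1+\frac{Q}{(1+Q)^2}.
\]
At $N=3$ we have $Q\approx0.146$, the exponent is about $0.026$, and so the bound is about $1.026<1.11$. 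For general $N\ge3$ I would prove the comparison by an elementary inequality on $Q\in(0,Q_3]$. For instance, one can use $e^u\le1+2u$ for small $u$ and then reduce to $2Q(1+Q)^2\le(1-Q)(1-Q^2)(1-Q^4)$ on that interval, which is a routine polynomial check. This proves $c_{rst}\le\gamma_N^2\eta_{rs}\eta_{rt}\eta_{st}$, and the final inequality in \eqref{eq:interior-weight} is trivial since $\gamma_N\ge1$.

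The main obstacle is the algebraic reduction of $R_{rst}$ to the telescoped product. The delicate points are counting the $(1-Q)$ normalizations correctly, since there is one more factor in the denominator than in the numerator, and verifying that the $q$-power prefactors cancel exactly. I will double-check both against the explicit value
\[
R_{111}=1+\frac{1}{N^2(N^2-2)}.
\]
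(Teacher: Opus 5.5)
Your proposal is correct, and it follows the paper's route up to a point and then diverges. The boundary case is the paper's argument verbatim. In the interior case you reach the same $Q$-Pochhammer form of $R_{rst}$ as \eqref{eq:R-poch}; your count of the single surplus factor $1-Q$ and the cancellation of the $q$-powers both check out.

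From there the two arguments differ:
\begin{itemize}
\item The paper uses \eqref{eq:q-identity} only for its sign, to show that $R_{rst}$ is coordinatewise nondecreasing. It then bounds $R_{rst}$ by $\lim_{m\to\infty}R_{mmm}=\prod_{\ell\geq2}(1-z^\ell)^{-1}$, and finishes with $\prod_{\ell\ge2}(1-z^\ell)^{-1}\le(1-z)/(1-z-z^2)\le1+N^{-2}$.
\item You telescope in $t$ starting from $R_{rs0}=1$ and bound each increment directly. Your factor identity is exactly the $Q$-form of \eqref{eq:q-identity} with $a=j+1$, $b=r$, $c=s$, namely $R_{r,s,j}/R_{r,s,j-1}=1+[r][s]/([j+1][r+s+j+1])$. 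So you use the same algebraic input quantitatively, where the paper uses it qualitatively.
\end{itemize}

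What your route buys is a direct, limit-free uniform bound $\exp\bigl(Q^2/((1-Q)(1-Q^2)(1-Q^4))\bigr)$. What it costs is a little sharpness. The paper's argument also shows that $R_{rst}$ is monotone and identifies its exact supremum $\prod_{\ell\ge2}(1-Q^\ell)^{-1}$. At $N=3$ that supremum is approximately $1.0255$, against your $1.0258$.

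Neither loss matters here, since both bounds sit far below $\gamma_3^2=10/9$. Both proofs close with a one-variable polynomial inequality: $2Q(1+Q)^2\le(1-Q)(1-Q^2)(1-Q^4)$ for you, and $z(1+z)^2\le1-z-z^2$ in the paper. In each case the two sides move in opposite directions, so it reduces to an endpoint check.
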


\begin{proof}
If $t=0$, then \eqref{eq:theta} gives $\Theta_{rs0}=d_{r+s}$.  Hence
\[
  c_{rs0}
  =\frac{\sqrt{d_{r+s}d_rd_s}}{d_{r+s}}
  =\eta_{rs}.
\]

Now suppose $r,s,t\geq1$.  By \eqref{eq:theta}, \eqref{eq:c-rst} and \eqref{eq:eta-gamma},
\begin{equation}\label{eq:R-rst}
  R_{rst}:=\frac{c_{rst}}{\eta_{rs}\eta_{rt}\eta_{st}}
  =\frac{\qfact{r+s+1}\qfact{r+t+1}\qfact{s+t+1}}
  {\qfact{r+s+t+1}\qfact{r+1}\qfact{s+1}\qfact{t+1}}.
\end{equation}
For $a,b,c\in\Nzero$, the quantum-integer identity
\begin{equation}\label{eq:q-identity}
  \qint{a+b}\qint{a+c}-\qint{a}\qint{a+b+c}=\qint{b}\qint{c}
\end{equation}
follows directly from the definition of $\qint{\cdot}$. Applying it with $a=r+2$, $b=s$, and $c=t$, and using $s,t\geq1$, gives
\[
  \frac{R_{r+1,s,t}}{R_{rst}}
  =\frac{\qint{r+s+2}\qint{r+t+2}}{\qint{r+s+t+2}\qint{r+2}}
  =1+\frac{\qint{s}\qint{t}}
  {\qint{r+s+t+2}\qint{r+2}}
  \geq1.
\]
Thus $R_{rst}$ is nondecreasing in $r$. By symmetry, it is coordinatewise nondecreasing in $r,s,t$.

Put $z=q^2$ and $(z;z)_m=\prod_{\ell=1}^m(1-z^\ell)$.  Since
\[
  \qfact{m}=q^{-m(m-1)/2}\frac{(z;z)_m}{(1-z)^m},
\]
the powers of $q$ cancel in \eqref{eq:R-rst}, and
\begin{equation}\label{eq:R-poch}
  R_{rst}
  =(1-z)
  \frac{(z;z)_{r+s+1}(z;z)_{r+t+1}(z;z)_{s+t+1}}
  {(z;z)_{r+s+t+1}(z;z)_{r+1}(z;z)_{s+1}(z;z)_{t+1}}.
\end{equation}
By coordinatewise monotonicity,
\begin{equation}\label{eq:R-limit}
  R_{rst}\leq \lim_{m\to\infty}R_{mmm}
  =\prod_{\ell=2}^\infty(1-z^\ell)^{-1}.
\end{equation}
For $N\geq3$, one has
\[
  0<z=\left(\frac{2}{N+\sqrt{N^2-4}}\right)^2\leq\left(\frac{2}{3+\sqrt5}\right)^2<\frac16.
\]
The elementary product estimate gives
\begin{equation}\label{eq:product-bound}
  R_{rst}
  \leq \prod_{\ell=2}^\infty(1-z^\ell)^{-1}
  \leq \left(1-\sum_{\ell=2}^\infty z^\ell\right)^{-1}
  =\frac{1-z}{1-z-z^2}.
\end{equation}
Since $N^2=(1+z)^2/z$ and $z(1+z)^2\leq1-z-z^2$ for $0<z<1/6$,
\[
  \frac{1-z}{1-z-z^2}
  =1+\frac{z^2}{1-z-z^2}
  \leq1+\frac{z}{(1+z)^2}
  =1+\frac1{N^2}
  =\gamma_N^2.
\]
Therefore, $R_{rst}\leq \gamma_N^2$ and
\[
  c_{rst}\leq\gamma_N^2\eta_{rs}\eta_{rt}\eta_{st}
  \leq\gamma_N^3\eta_{rs}\eta_{rt}\eta_{st},
\]
which proves \eqref{eq:interior-weight}.
\end{proof}

The $\gamma_N$-loss in Lemma~\ref{lem:factor} is absorbed by the exact
eigenvalues:

\begin{lemma}[Eigenvalue absorption]\label{lem:eigenvalue-absorption}
For every $k\geq2$,
\begin{equation}\label{eq:eigenvalue-absorption}
  \gamma_N^2\sum_{r=1}^{k-1}\eta_{r,k-r}^2
  \leq \alpha_k-1.
\end{equation}
\end{lemma}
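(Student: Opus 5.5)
The plan is to compute the difference $\alpha_k-1-\sum_{r=1}^{k-1}\eta_{r,k-r}^2$ exactly, using a convolution formula for $U_k'$. This will show that the difference is a positive sum of products of dimensions. The extra $N^{-2}\sum_r\eta_{r,k-r}^2$ coming from $\gamma_N^2$ is then dominated term by term. The argument does not actually use $N\ge3$.

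\emph{Step 1: a derivative formula.} Show that for all $x$ and all $k\ge1$,
\[
  U_k'(x)=\sum_{r=0}^{k-1}U_r(x)U_{k-1-r}(x).
\]
This follows by induction from \eqref{eq:cheb}. Differentiating the recursion gives $U_{k+1}'=U_k+xU_k'-U_{k-1}'$. Substituting the inductive hypothesis and applying the recursion $xU_{k-1-r}-U_{k-2-r}=U_{k-r}$ inside the sum gives the formula. Alternatively, it follows from the generating function $\sum_k U_k(x)z^k=(1-xz+z^2)^{-1}$ by differentiating in $x$. Evaluating at $x=N$, with $d_m=U_m(N)$ and the convention $d_{-1}=0$, gives
\[
  \alpha_k=\frac{N}{d_k}\sum_{r=0}^{k-1}d_rd_{k-1-r}.
\]

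\emph{Step 2: exact computation of the difference.} Since $\eta_{r,k-r}^2=d_rd_{k-r}/d_k$, multiply everything by $d_k$. Apply $d_{k-r}=Nd_{k-1-r}-d_{k-2-r}$ for $1\le r\le k-1$:
\[
  \sum_{r=1}^{k-1}d_rd_{k-r}=N\sum_{r=1}^{k-1}d_rd_{k-1-r}-\sum_{r=1}^{k-2}d_rd_{k-2-r}.
\]
Separate the $r=0$ term in Step 1 and use $Nd_{k-1}-d_k=d_{k-2}$:
\[
  d_k(\alpha_k-1)=N\sum_{r=1}^{k-1}d_rd_{k-1-r}+d_{k-2}.
\]
Subtracting the two displays gives
\[
  d_k\Bigl(\alpha_k-1-\sum_{r=1}^{k-1}\eta_{r,k-r}^2\Bigr)=\sum_{r=0}^{k-2}d_rd_{k-2-r}\;\bigl(=U_{k-1}'(N)\bigr).
\]

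\emph{Step 3: absorbing the $1/N^2$ loss.} Since $\gamma_N^2=1+N^{-2}$, it remains to show
\[
  \frac1{N^2}\sum_{r=1}^{k-1}d_rd_{k-r}\le\sum_{r=0}^{k-2}d_rd_{k-2-r}.
\]
All $d_m\ge0$, and $d_m=Nd_{m-1}-d_{m-2}$, so $d_m\le Nd_{m-1}$ for $m\ge1$. Hence $d_rd_{k-r}\le N^2d_{r-1}d_{k-r-1}$ for $1\le r\le k-1$. Summing and shifting the index $r-1\mapsto r$ gives exactly the right-hand side, which proves \eqref{eq:eigenvalue-absorption}.

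The main obstacle is the bookkeeping in Step 2: matching the boundary terms $r=0$ and $r=k-1$ and using $d_{-1}=0$ correctly. For $k=2$ the check is direct: the left side is $(1+N^{-2})N^2/(N^2-1)$, and $\alpha_2-1=(N^2+1)/(N^2-1)$, so equality holds. This confirms the normalization.
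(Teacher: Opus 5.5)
Your proof is correct, and Steps 1 and 2 follow the paper. The paper also gets $d_{k+1}'=\sum_{i+j=k}d_id_j$ from the generating function and reaches the same identity $D_k=Nd_k'-d_k-d_{k-1}'$, with $D_k=\sum_{r=1}^{k-1}d_rd_{k-r}$. It combines $d_{k+1}'=2d_k+D_k$ with the differentiated recurrence instead of expanding $d_{k-r}$ inside the sum, but the resulting identity is the same as yours.

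The routes differ in Step 3. The paper substitutes the recurrences a second time to get the closed form
\[
 N^2\bigl(Nd_k'-d_k-\gamma_N^2D_k\bigr)=d_{k-1}'+(\alpha_{k-2}-1)d_{k-2}.
\]
It then needs a separate check at $k=2$, where $\alpha_0-1<0$. For $k\ge3$ it needs two further inputs: $\alpha_{k-2}\ge1$ from Lemma~\ref{lem:alpha-lower}, which uses Cauchy--Schwarz over the Chebyshev roots, and $d_{k-1}'>0$ from the root formula \eqref{eq:root}.

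Your termwise bound $d_rd_{k-r}\le N^2d_{r-1}d_{k-r-1}$ uses only the recurrence and positivity of the dimensions. After the index shift it gives $D_k\le N^2d_{k-1}'$ directly, uniformly in $k\ge2$. So it avoids both extra inputs and the case split, and it is the more elementary argument. In return, the paper's route gives an explicit formula for the slack, which shows at once that equality holds exactly at $k=2$ and how large the margin is for $k\ge3$. Your argument shows the slack only as the sum $\sum_{r=1}^{k-1}\bigl(N^2d_{r-1}d_{k-1-r}-d_rd_{k-r}\bigr)$ of nonnegative terms.

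Your remark that $N\ge3$ is never used is also right. For $N=2$ your bound agrees with the explicit computation \eqref{eq:N2-eigenvalue-absorption}.
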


\begin{proof}
Recall that $d_k=U_k(N)$, and let us write $d_k':=U_k'(N)$.  Set
\[
  D_k=\sum_{r=1}^{k-1}d_rd_{k-r}.
\]
Since $\eta_{r,k-r}^2=d_rd_{k-r}/d_k$, the left-hand side of
\eqref{eq:eigenvalue-absorption} is $\gamma_N^2D_k/d_k$.  

Recall the generating function
\[
  \sum_{k\geq0}U_k(x)w^k=\frac1{1-xw+w^2}.
\]
Differentiating with respect to $x$ and then setting $x=N$ gives
\[
  \sum_{k\geq0}d_k'w^k
  =w\left(\sum_{k\geq0}d_kw^k\right)^2.
\]
Comparing the coefficient of $w^{k+1}$ yields
\begin{equation}\label{eq:derivative-convolution}
  d_{k+1}'=\sum_{i+j=k}d_id_j.
\end{equation}
The two endpoint terms on the right-hand side are $d_0d_k$ and $d_kd_0$.
Since $d_0=1$, this gives
\begin{equation}\label{eq:D-k-first}
  d_{k+1}'=2d_k+D_k.
\end{equation}
On the other hand, differentiating the recurrence
$U_{k+1}(x)=xU_k(x)-U_{k-1}(x)$ at $x=N$ gives
\begin{equation}\label{eq:differentiated-recurrence}
  d_{k+1}'=d_k+Nd_k'-d_{k-1}'.
\end{equation}
Combining \eqref{eq:D-k-first} and
\eqref{eq:differentiated-recurrence},
\begin{equation}\label{eq:D-k}
  D_k=Nd_k'-d_k-d_{k-1}'.
\end{equation}

Since $\eta_{r,k-r}^2=d_rd_{k-r}/d_k$ and
$\alpha_k=Nd_k'/d_k$, the desired inequality \eqref{eq:eigenvalue-absorption} is equivalent to
\begin{equation}\label{eq:eigenvalue-absorption-equivalent}
  \gamma_N^2D_k\leq Nd_k'-d_k.
\end{equation}
Using $\gamma_N^2=1+N^{-2}$ and \eqref{eq:D-k},
\begin{align*}
  N^2\bigl(Nd_k'-d_k-\gamma_N^2D_k\bigr)
  &=N^2\bigl(Nd_k'-d_k-D_k\bigr)-D_k=N^2d_{k-1}'-D_k.
\end{align*}
Now substitute \eqref{eq:D-k} once more and use the differentiated
recurrence \eqref{eq:differentiated-recurrence}
\[
        d_k'=d_{k-1}+Nd_{k-1}'-d_{k-2}'
\]
together with $d_k=Nd_{k-1}-d_{k-2}$.  This gives
\begin{align}
  N^2\bigl(Nd_k'-d_k-\gamma_N^2D_k\bigr)
  &=N^2d_{k-1}'-(Nd_k'-d_k-d_{k-1}')\notag\\
  &=(N^2+1)d_{k-1}'-Nd_k'+d_k\notag\\
  &=d_{k-1}'+Nd_{k-2}'-d_{k-2}\notag\\
  &=d_{k-1}'+(\alpha_{k-2}-1)d_{k-2}.
  \label{eq:eigenvalue-identity}
\end{align}
For $k=2$, the right-hand side is $d_1'-d_0=0$.  For $k\geq3$,
Lemma~\ref{lem:alpha-lower} gives $\alpha_{k-2}-1\geq0$.  Moreover, the root formula in \eqref{eq:root} gives
\[
  d_m'=d_m\sum_{\ell=1}^m\frac{1}{N-2\cos(\ell\pi/(m+1))}>0,
  \qquad m\geq1.
\]
Thus the right-hand side of \eqref{eq:eigenvalue-identity} is nonnegative,
so $Nd_k'-d_k-\gamma_N^2D_k\geq0$. This proves
\eqref{eq:eigenvalue-absorption-equivalent}, and hence
\eqref{eq:eigenvalue-absorption}.
\end{proof}

Now we are ready to prove the key third-moment estimate for $N\ge 3$.

\begin{proposition}\label{prop:third-ge3}
Let $u=u^*\in\Pol(\OF)$ satisfy $h(u)=0$. Define $S,Q\geq0$ by
\begin{equation}\label{eq:S-Q}
  S^2=h(u^2),
  \qquad
  Q^2=h\bigl(u(L-I)u\bigr).
\end{equation}
Then
\begin{equation}\label{eq:third-main}
  |h(u^3)|\leq3S^2Q+Q^3.
\end{equation}
\end{proposition}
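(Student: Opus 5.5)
Decompose $u=\sum_{k\ge1}u_k$ with $u_k=\Pi_k u\in V_k$. Each $V_k$ is $*$-invariant and $u=u^*$, so every $u_k$ is self-adjoint. Write $x_k=\|u_k\|_2$. Then
\[
S^2=\sum_k x_k^2,\qquad Q^2=\sum_k(\alpha_k-1)x_k^2 .
\]
Expanding the cube,
\[
h(u^3)=\sum_{i,j,k}h(u_iu_ju_k)=\sum_{i,j,k}h\bigl(\Pi_k(u_iu_j)\,u_k\bigr),
\]
using traciality and self-adjointness of $u_k$. A term vanishes unless $(i,j,k)$ is admissible, by the fusion rules \eqref{eq:fusion}. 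For admissible triples, Cauchy--Schwarz and Proposition~\ref{prop:block} give
\[
|h(u_iu_ju_k)|\le c_{rst}\,x_ix_jx_k,\qquad (i,j,k)=(s+t,r+t,r+s).
\]
Since $u_0=0$, no label vanishes, so at most one of $r,s,t$ is zero. I split the sum into boundary triples (exactly one parameter zero) and interior triples ($r,s,t\ge1$).

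\textbf{Boundary terms.} A boundary triple is a permutation of $(a,b,a+b)$ with $a,b\ge1$, with weight $c=\eta_{ab}$ by Lemma~\ref{lem:factor}. Counting the three positions of the largest label, their contribution is at most
\[
3\sum_{k\ge2}x_k\sum_{r=1}^{k-1}\eta_{r,k-r}x_rx_{k-r}.
\]
For fixed $k$, Cauchy--Schwarz in $r$ gives
\[
\sum_{r}\eta_{r,k-r}x_rx_{k-r}\le\Bigl(\sum_r\eta_{r,k-r}^2\Bigr)^{1/2}\Bigl(\sum_r x_r^2x_{k-r}^2\Bigr)^{1/2}.
\]
By Lemma~\ref{lem:eigenvalue-absorption} (even without the $\gamma_N^2$), the first factor is at most $(\alpha_k-1)^{1/2}$. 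A second Cauchy--Schwarz in $k$ then bounds the boundary contribution by
\[
3\Bigl(\sum_k(\alpha_k-1)x_k^2\Bigr)^{1/2}\Bigl(\sum_k\sum_r x_r^2x_{k-r}^2\Bigr)^{1/2}\le 3QS^2,
\]
because $\sum_{k}\sum_{r+s=k}x_r^2x_s^2\le(\sum x_r^2)^2=S^4$.

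\textbf{Interior terms.} These carry $c_{rst}\le\gamma_N^3\eta_{rs}\eta_{rt}\eta_{st}$ and must be bounded by $Q^3$. The plan is to re-index the sum by $(r,s,t)\in\N^3$ with vectors
\[
X_{rs}:=\gamma_N\eta_{rs}x_{r+s}.
\]
The interior sum is then $\sum_{r,s,t\ge1}X_{st}X_{rt}X_{rs}$, and Lemma~\ref{lem:eigenvalue-absorption} gives the mass bound
\[
\sum_{r+s=k}X_{rs}^2\le(\alpha_k-1)x_k^2,\qquad\text{so}\qquad \sum_{r,s}X_{rs}^2\le Q^2 .
\]
The key inequality needed is the triangle-type trace bound
\[
\Bigl|\sum_{r,s,t}X_{rs}X_{st}X_{tr}\Bigr|=|\Tr(M^3)|\le\|M\|_{S_2}^3,
\]
where $M=(X_{rs})_{r,s\ge1}$ is a symmetric nonnegative matrix. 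This bound holds since $|\Tr M^3|\le\|M\|_{S_3}^3\le\|M\|_{S_2}^3$. Therefore the interior contribution is at most $Q^3$, with no combinatorial multiplicity: each ordered $(i,j,k)$ corresponds to exactly one $(r,s,t)$.

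\textbf{Expected obstacle.} The main obstacle is the bookkeeping in this last step. One must check that the ordered-triple sum matches $\Tr(M^3)$ exactly. One must also make sure the $\gamma_N^3$ factor is fully absorbed by the three $\gamma_N$'s placed in $M$, using $\gamma_N^2\sum_r\eta^2_{r,k-r}\le\alpha_k-1$ inside $\|M\|_{S_2}^2$. This is the only place where the interior factorization is essential. Adding the two pieces gives $|h(u^3)|\le 3S^2Q+Q^3$.
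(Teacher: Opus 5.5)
Your proposal is correct and is essentially the paper's proof. You use the same block decomposition, the same matrix $X_{rs}=\gamma_N\eta_{rs}x_{r+s}$ (the paper's $K$) with $\|K\|_{S_2}\le Q$ from Lemma~\ref{lem:eigenvalue-absorption}, and the same bound $\Tr(K^3)\le\|K\|_{S_2}^3$ for the interior terms. The only difference is cosmetic, in the boundary terms: your double Cauchy--Schwarz is the Hilbert--Schmidt pairing of $K$ with $\mathbf b\mathbf b^{T}$, whereas the paper writes $\gamma_N^{-1}\ip{K\mathbf b}{\mathbf b}\le\|K\|_{\mathrm{op}}\|\mathbf b\|_2^2$, and both give $3QS^2$.
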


\begin{proof}
Write
\[
  u=\sum_{k\geq1}u_k,
  \qquad
  u_k\in V_k,
  \qquad
  b_k=\|u_k\|_2.
\]
Since each $V_k$ is invariant under the involution and the decomposition is orthogonal, each $u_k$ is self-adjoint. Also, only finitely many components $u_k$ are nonzero, because $u\in\Pol(\OF)$; hence all sums and matrices below are finite.
Put $b_0=0$. Then by orthogonality, 
\begin{equation}\label{eq:S-Q-blocks}
  S^2=\sum_{k\geq1}b_k^2,
  \qquad
  Q^2=\sum_{k\geq1}(\alpha_k-1)b_k^2.
\end{equation}

By the fusion rules, $h(u_iu_ju_k)$ vanishes unless $(i,j,k)$ is admissible. For an admissible triple $(i,j,k)=(s+t,r+t,r+s)$, orthogonality, Cauchy--Schwarz, and Proposition~\ref{prop:block} give
\begin{align*}
  |h(u_iu_ju_k)|
  &=|h(\Pi_k(u_iu_j)u_k)|\\
  &\leq \|\Pi_k(u_iu_j)\|_2\|u_k\|_2\\
  &\leq c_{rst}b_ib_jb_k
   =c_{rst}b_{r+s}b_{r+t}b_{s+t},
\end{align*}
and thus 
\begin{equation}\label{eq:triple-sum}
  |h(u^3)|
  \leq\sum_{r,s,t\geq0}
  c_{rst}b_{r+s}b_{r+t}b_{s+t}.
\end{equation}

Let $\mathbf b=(b_r)_{r\geq1}\in\ell_2(\N)$ and define the symmetric matrix
$K=(K_{rs})$ with
\begin{equation}\label{eq:K}
  K_{rs}=\gamma_N\eta_{rs}b_{r+s},
  \qquad r,s\geq1.
\end{equation}
By Lemma~\ref{lem:eigenvalue-absorption},
\begin{align}
  \|K\|_{S_2}^2
  =\gamma_N^2\sum_{k\geq2}
    \left(\sum_{r=1}^{k-1}\eta_{r,k-r}^2\right)b_k^2\leq\sum_{k\geq2}(\alpha_k-1)b_k^2
  =Q^2.
  \label{eq:K-HS}
\end{align}

The terms in \eqref{eq:triple-sum} with exactly one of $r,s,t$ equal to zero contribute, by \eqref{eq:boundary-weight},
\begin{align}
  3\sum_{r,s\geq1}\eta_{rs}b_rb_sb_{r+s}
  =\frac3{\gamma_N}\ip{K\mathbf b}{\mathbf b} 
  \leq3\|K\|_{\mathrm{op}}\|\mathbf b\|_2^2
  \leq3QS^2.
  \label{eq:boundary-sum}
\end{align}
Terms with at least two zero coordinates vanish because $b_0=0$.

For the terms with $r,s,t\ge 1$, Lemma~\ref{lem:factor} yields
\begin{align}
  \sum_{r,s,t\geq1}c_{rst}b_{r+s}b_{r+t}b_{s+t}
  &\leq \sum_{r,s,t\geq1}\gamma_N^3\eta_{rs}\eta_{rt}\eta_{st}b_{r+s}b_{r+t}b_{s+t}\notag\\
  &=\sum_{r,s,t\geq1}K_{rs}K_{rt}K_{st}\notag\\
  &=\Tr(K^3)
  \leq\|K\|_{S_3}^3
  \leq\|K\|_{S_2}^3
  \leq Q^3.
  \label{eq:interior-sum}
\end{align}
Combining \eqref{eq:triple-sum}, \eqref{eq:boundary-sum} and \eqref{eq:interior-sum} proves \eqref{eq:third-main}.
\end{proof}

\section{The third-moment estimate for \texorpdfstring{$N=2$}{N = 2}}\label{sect:N2}

As explained in the introduction, Proposition~\ref{prop:block} remains valid
when $N=2$, but the factorization in Lemma~\ref{lem:factor} does not. We use
instead a symmetric trilinear estimate obtained from the normalized
Clebsch--Gordan isometry.

\begin{lemma}[Trilinear block estimate for $N=2$]
\label{lem:N2-trilinear}
Assume $N=2$.  Let $(i,j,k)=(s+t,r+t,r+s)$ be admissible, and let
$a=a^*\in V_i$, $b=b^*\in V_j$, and $c=c^*\in V_k$.  Put
\begin{equation}\label{eq:N2-trilinear-coefficient}
 \widetilde c_{rst}
 =
 \min\left\{
 \sqrt{\frac{d_jd_k}{d_i}},
 \sqrt{\frac{d_id_k}{d_j}},
 \sqrt{\frac{d_id_j}{d_k}}
 \right\}.
\end{equation}
Then
\begin{equation}\label{eq:N2-trilinear}
 |h(abc)|
 \leq\widetilde c_{rst}\|a\|_2\|b\|_2\|c\|_2.
\end{equation}
With
\[
 \eta_{rs}=\left(\frac{d_rd_s}{d_{r+s}}\right)^{1/2},
 \qquad
 \gamma_2=\frac{\sqrt5}{2},
\]
one has
\begin{equation}\label{eq:N2-boundary-weight}
 \widetilde c_{rs0}=\eta_{rs},
 \qquad r,s\geq1,
\end{equation}
and
\begin{equation}\label{eq:N2-interior-weight}
 \widetilde c_{rst}
 \leq\gamma_2^3\eta_{rs}\eta_{rt}\eta_{st},
 \qquad r,s,t\geq1.
\end{equation}
\end{lemma}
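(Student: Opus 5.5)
The plan is to prove the trilinear bound \eqref{eq:N2-trilinear} by a Hilbert--Schmidt contraction argument that avoids $\Theta_{rst}$ entirely, and then to verify the weights \eqref{eq:N2-boundary-weight} and \eqref{eq:N2-interior-weight} by elementary arithmetic with $d_m=m+1$. Throughout, $N=2$ and $q=1$, so $d_m=[m+1]=m+1$.

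\emph{The trilinear estimate.} Write $a=\Fou_i(X)$ and $b=\Fou_j(Y)$. By \eqref{eq:product-block} and \eqref{eq:fourier-isometry},
\[
\|\Pi_k(ab)\|_2=d_k^{-1/2}\|(v_k^{i,j})^*(X\otimes Y)v_k^{i,j}\|_{S_2}.
\]
Compression by the isometry $v_k^{i,j}$ does not increase the $S_2$-norm, and $\|X\otimes Y\|_{S_2}=\|X\|_{S_2}\|Y\|_{S_2}=d_i^{1/2}d_j^{1/2}\|a\|_2\|b\|_2$. Hence
\[
\|\Pi_k(ab)\|_2\le\sqrt{d_id_j/d_k}\,\|a\|_2\|b\|_2 .
\]
Combining this with $h(abc)=h(\Pi_k(ab)c)$ and Cauchy--Schwarz gives the third entry of the minimum. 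Since $h$ is a trace, $h(abc)=h(bca)=h(cab)$. Applying the same argument to $\Pi_i(bc)$ and to $\Pi_j(ca)$ gives the other two entries; admissibility is symmetric under permuting $(i,j,k)$, so the intertwiners $v_i^{j,k}$ and $v_j^{k,i}$ exist. Taking the minimum proves \eqref{eq:N2-trilinear}. The self-adjointness hypothesis is harmless here; it is only needed downstream.

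\emph{Boundary weight.} For $t=0$ we have $(i,j,k)=(s,r,r+s)$. Since $d_m$ is increasing, $d_{r+s}\ge d_r$ and $d_{r+s}\ge d_s$. The entry $\sqrt{d_rd_s/d_{r+s}}=\eta_{rs}$ is at most $\sqrt{d_sd_{r+s}/d_r}$, because this comparison is equivalent to $d_r^2\le d_{r+s}^2$. Symmetrically, it is at most $\sqrt{d_rd_{r+s}/d_s}$. So the minimum equals $\eta_{rs}$.

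\emph{Interior weight.} Both sides of \eqref{eq:N2-interior-weight} are symmetric in $(r,s,t)$, so we may assume $t\le r,s$; then $k=r+s$ is the largest label. It suffices to bound $\widetilde c_{rst}$ by the entry $\sqrt{d_id_j/d_k}$. Put $x=s+1$, $y=t+1$, $z=r+1$, so that $2\le y\le x,z$. Squaring and cancelling the common factor $d_{r+s}=x+z-1$, the claim reduces to
\[
f(x,y,z):=\frac{(x+y-1)(z+y-1)}{xyz}\le\gamma_2^3=\frac{5\sqrt5}{8}.
\]
For fixed $y$, each factor $(x+y-1)/x$ and $(z+y-1)/z$ is decreasing, so the maximum occurs at $x=z=y$. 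This gives $g(y)=(2y-1)^2/y^3$, which is decreasing for $y\ge2$, with $g(2)=9/8$. Since $9/8<5\sqrt5/8\approx1.397$, the inequality holds with room to spare.

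I expect no serious obstacle. The one point requiring care is the reduction via symmetry to the case $t\le r,s$: it must be checked that choosing the entry with the largest label $d_k$ in the denominator is compatible with the cyclic rotations used in the first step. It is, because all three entries of the minimum are available regardless of the ordering.
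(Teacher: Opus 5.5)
Your proposal is correct and follows the paper's proof essentially step for step. Both arguments use Hilbert--Schmidt contractivity of the compression by $v_k^{i,j}$ together with traciality to get the three cyclic bounds, then $d_m=m+1$, a symmetry reduction to $t=\min\{r,s,t\}$, and the bound $(2t+1)^2/(t+1)^3\le 9/8\le\gamma_2^3$. The differences are cosmetic: you check the last monotonicity by differentiation rather than by the factorization $9(t+1)^3-8(2t+1)^2=(t-1)(9t^2+4t-1)$, and you correctly observe that self-adjointness is not actually needed for \eqref{eq:N2-trilinear}.
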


\begin{proof}
By traciality, self-adjointness, and orthogonality,
\[
 |h(abc)|
 =|h(\Pi_k(ab)c)|
 \leq\|\Pi_k(ab)\|_2\|c\|_2.
\]
Write $a=\Fou_i(X)$ and $b=\Fou_j(Y)$.  If
$v_k^{i,j}:H_k\to H_i\otimes H_j$ is the normalized Clebsch--Gordan
isometry, then \eqref{eq:fourier-isometry} and
\eqref{eq:product-block} give
\begin{align*}
 \|\Pi_k(ab)\|_2
 &=d_k^{-1/2}
   \|(v_k^{i,j})^*(X\otimes Y)v_k^{i,j}\|_{S_2}\\
 &\leq d_k^{-1/2}\|X\otimes Y\|_{S_2}\\
 &=\sqrt{\frac{d_id_j}{d_k}}\,\|a\|_2\|b\|_2.
\end{align*}
Here compression by an isometry is contractive for the Hilbert--Schmidt norm.
Cyclically projecting onto $V_i$ or $V_j$ gives the other two
bounds in \eqref{eq:N2-trilinear-coefficient}, and hence
\eqref{eq:N2-trilinear}.

Now $d_m=m+1$ by \eqref{eq:dimension-q} with $q=1$. If $t=0$, then $r+s$ is the
largest index, so \eqref{eq:N2-trilinear-coefficient} gives
\[
 \widetilde c_{rs0}
 =\sqrt{\frac{d_rd_s}{d_{r+s}}}
 =\eta_{rs}.
\]
Suppose $r,s,t\geq1$.  By symmetry, assume that
$t=\min\{r,s,t\}$.  Then $r+s$ is the largest index, and division by
$\eta_{rs}\eta_{rt}\eta_{st}$ gives
\[
 \frac{\widetilde c_{rst}}
 {\eta_{rs}\eta_{rt}\eta_{st}}
 =
 \frac{(r+t+1)(s+t+1)}
 {(r+1)(s+1)(t+1)}
 \leq\frac{(2t+1)^2}{(t+1)^3}
 \leq\frac98
 \leq\gamma_2^3.
\]
The second last inequality follows from
\[
 9(t+1)^3-8(2t+1)^2
 =(t-1)(9t^2+4t-1)\geq0.
\]
This proves \eqref{eq:N2-interior-weight}.
\end{proof}

\medskip
\noindent\textit{Remark.}
Although \eqref{eq:N2-trilinear} is valid for every $N$, it closes the
present matrix argument only when $N=2$. For $r=s=t=m$, the required
comparison is
\[
 \widetilde c_{mmm}\leq(\alpha_{2m}-1)^{3/2}.
\]
For $N=2$, the two sides are of order $m^{1/2}$ and $m^3$, respectively;
for fixed $N>2$, they are of order $q^{-m}$ and $m^{3/2}$, respectively.
Thus Proposition~\ref{prop:block} is needed in the $N>2$ proof.

\Needspace{11\baselineskip}
\begin{proposition}[Centered third-moment estimate for $N=2$]
\label{prop:N2-centered-third}
Assume $N=2$.  If $u=u^*\in\Pol(O_F^+)$ and $h(u)=0$, define $S,Q\geq0$ by
\[
 S^2=h(u^2),
 \qquad
 Q^2=h\bigl(u(L-I)u\bigr).
\]
Then
\begin{equation}\label{eq:N2-third-heat}
 |h(u^3)|\leq3S^2Q+Q^3.
\end{equation}
\end{proposition}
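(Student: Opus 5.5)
We follow the proof of Proposition~\ref{prop:third-ge3} verbatim, with Lemma~\ref{lem:N2-trilinear} in place of Proposition~\ref{prop:block} and Lemma~\ref{lem:factor}. Decompose $u=\sum_{k\geq1}u_k$ with $u_k\in V_k$, a finite sum. Since each $V_k$ is $*$-invariant and the decomposition is orthogonal, each $u_k$ is self-adjoint. Put $b_k=\|u_k\|_2$ and $b_0=0$, so that $S^2=\sum_k b_k^2$ and $Q^2=\sum_k(\alpha_k-1)b_k^2$ with $\alpha_k=k(k+2)/3$ by \eqref{eq:alpha-N2}. By the fusion rules only admissible triples contribute. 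Lemma~\ref{lem:N2-trilinear} applies because all three factors are self-adjoint, and it yields
\[
 |h(u^3)|\leq\sum_{r,s,t\geq0}\widetilde c_{rst}\,b_{r+s}b_{r+t}b_{s+t}.
\]

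Next define the symmetric matrix $K_{rs}=\gamma_2\eta_{rs}b_{r+s}$ for $r,s\geq1$, with $\gamma_2^2=5/4$. The boundary terms (exactly one of $r,s,t$ zero) equal $3\sum\eta_{rs}b_rb_sb_{r+s}$ by \eqref{eq:N2-boundary-weight}. This is $\frac{3}{\gamma_2}\ip{K\mathbf b}{\mathbf b}\leq 3\|K\|_{S_2}S^2$. Terms with two zeros vanish since $b_0=0$. The interior terms are bounded via \eqref{eq:N2-interior-weight} by
\[
 \sum K_{rs}K_{rt}K_{st}=\Tr(K^3)\leq\|K\|_{S_2}^3 .
\]
Both bounds therefore reduce the proposition to $\|K\|_{S_2}\leq Q$.

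The only genuinely new step is the $N=2$ analogue of Lemma~\ref{lem:eigenvalue-absorption}: for all $k\geq2$,
\[
 \tfrac54\sum_{r=1}^{k-1}\frac{d_rd_{k-r}}{d_k}\leq\alpha_k-1.
\]
With $d_m=m+1$ this is an explicit polynomial inequality. The sum is
\[
 \sum_{r=1}^{k-1}(r+1)(k-r+1)=\frac{(k-1)(k^2+7k+6)}{6}=\frac{(k-1)(k+1)(k+6)}{6},
\]
so the left side equals $\frac{5(k-1)(k+6)}{24}$. The right side is
\[
 \alpha_k-1=\frac{k^2+2k-3}{3}=\frac{(k-1)(k+3)}{3}=\frac{8(k-1)(k+3)}{24}.
\]
The inequality thus reduces to $5(k+6)\leq 8(k+3)$, i.e.\ $k\geq2$, which holds. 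It is an equality at $k=2$, and at $k=2$ both sides vanish anyway. Alternatively, the identity \eqref{eq:eigenvalue-identity} was derived purely from the recurrences, so it holds at $N=2$. It would give $\frac54 D_k\leq Nd_k'-d_k$ with $\gamma_2^2=1+1/N^2=5/4$, and hence the same conclusion. This is where the heat eigenvalues absorb the $\gamma_2$ loss.

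Summing over $k$ then gives $\|K\|_{S_2}^2\leq\sum_{k\geq2}(\alpha_k-1)b_k^2\leq Q^2$, and combining the boundary and interior bounds yields \eqref{eq:N2-third-heat}. The main obstacle is this absorption inequality, in particular confirming that the constant $\gamma_2^2=5/4$ matches the interior constant used in \eqref{eq:N2-interior-weight}. The computation above settles it. The remaining steps are formal repetitions of the $N\geq3$ argument.
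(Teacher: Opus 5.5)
Your proposal is correct and follows the paper's proof essentially verbatim: the same matrix $K_{rs}=\gamma_2\eta_{rs}b_{r+s}$, the same boundary/interior split using Lemma~\ref{lem:N2-trilinear}, and the same explicit absorption computation $\alpha_k-1-\frac54\sum_{r=1}^{k-1}\eta_{r,k-r}^2=\frac{(k-1)(k-2)}{8}\geq0$. One harmless slip: at $k=2$ both sides of the absorption inequality equal $5/3$ rather than vanishing, and this does not affect the argument.
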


\begin{proof}
Write
\[
 u=\sum_{k\geq1}u_k,
 \qquad
 u_k\in V_k,
 \qquad
 b_k=\|u_k\|_2,
\]
and put $b_0=0$.  Since each $V_k$ is invariant under the involution, every
$u_k$ is self-adjoint.  By \eqref{eq:alpha-N2},
\begin{equation}\label{eq:N2-SQ-blocks}
 S^2=\sum_{k\geq1}b_k^2,
 \qquad
 Q^2=\sum_{k\geq1}(\alpha_k-1)b_k^2,
 \qquad
 \alpha_k=\frac{k(k+2)}3.
\end{equation}
Let $\mathbf b=(b_r)_{r\geq1}$ and define the finitely supported symmetric matrix
\[
 K_{rs}=\gamma_2\eta_{rs}b_{r+s},
 \qquad r,s\geq1.
\]
For every $k\geq2$,
\[
 \sum_{r=1}^{k-1}\eta_{r,k-r}^2
 =\frac1{k+1}\sum_{r=1}^{k-1}(r+1)(k-r+1)
 =\frac{(k-1)(k+6)}6.
\]
Since $\gamma_2^2=5/4$,
\begin{equation}\label{eq:N2-eigenvalue-absorption}
 \alpha_k-1
 -\gamma_2^2\sum_{r=1}^{k-1}\eta_{r,k-r}^2
 =\frac{(k-1)(k-2)}8
 \geq0.
\end{equation}
Consequently,
\begin{align}
 \|K\|_{S_2}^2
 &=\gamma_2^2\sum_{r,s\geq1}\eta_{rs}^2b_{r+s}^2\notag\\
 &=\gamma_2^2\sum_{k\geq2}
   \left(\sum_{r=1}^{k-1}\eta_{r,k-r}^2\right)b_k^2\notag\\
 &\leq\sum_{k\geq2}(\alpha_k-1)b_k^2
 =Q^2.\label{eq:N2-K-HS}
\end{align}

The fusion rules and Lemma~\ref{lem:N2-trilinear} give
\[
 |h(u^3)|
 \leq
 \sum_{r,s,t\geq0}
 \widetilde c_{rst}b_{r+s}b_{r+t}b_{s+t}.
\]
The terms with exactly one of $r,s,t$ equal to zero are bounded by
\begin{align*}
 3\sum_{r,s\geq1}\eta_{rs}b_rb_sb_{r+s}
 =\frac3{\gamma_2}\langle K\mathbf b,\mathbf b\rangle
 \leq3\|K\|_{\mathrm{op}}\|\mathbf b\|_2^2
 \leq3QS^2.
\end{align*}
Terms with at least two zero coordinates vanish because $b_0=0$.  By
\eqref{eq:N2-interior-weight}, the terms with $r,s,t\geq1$ are bounded by
\begin{align*}
 \sum_{r,s,t\geq1}
 \gamma_2^3\eta_{rs}\eta_{rt}\eta_{st}
 b_{r+s}b_{r+t}b_{s+t}
 &=\operatorname{Tr}(K^3)\\
 &\leq\|K\|_{S_3}^3
 \leq\|K\|_{S_2}^3
 \leq Q^3.
\end{align*}
Combining the boundary and interior estimates proves
\eqref{eq:N2-third-heat}.
\end{proof}

An alternative proof through the graded-twist model is summarized in
Appendix~\ref{app:N2-twist}.

\section{The cubic majorant and the logarithmic Sobolev inequality}\label{sect4}

With the preceding estimates, the polynomial logarithmic Sobolev inequality
follows from the cubic-majorant strategy of \cite{XZ26}.

We first recall the cubic majorant lemma. 

\begin{lemma}\label{lem:cubic}
For every $s\ge0$ and every $\lambda>0$,
\begin{equation}\label{eq:cubic-majorant}
        2s^2\log s
        \le
        \frac{2}{3\lambda}s^3+(2\log\lambda+1)s^2
        -2\lambda s+\frac{\lambda^2}{3},
\end{equation}
where $2s^2\log s$ is interpreted as $0$ at $s=0$. Moreover, one has
\[
        2s^2\log s
        =
        \min_{\lambda>0}
        \left\{
        \frac{2}{3\lambda}s^3+(2\log\lambda+1)s^2
        -2\lambda s+\frac{\lambda^2}{3}
        \right\},\qquad s>0.
\]
\end{lemma}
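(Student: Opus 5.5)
We fix $s>0$ and study the right-hand side as a function of $\lambda$:
\[
  g_s(\lambda):=\frac{2}{3\lambda}s^3+(2\log\lambda+1)s^2-2\lambda s+\frac{\lambda^2}{3},\qquad \lambda>0.
\]
The plan is to show that $g_s$ attains its global minimum on $(0,\infty)$ exactly at $\lambda=s$, and that $g_s(s)=2s^2\log s$. This single fact gives both the inequality \eqref{eq:cubic-majorant} for $s>0$ and the min formula. The case $s=0$ is handled separately: there $g_0(\lambda)=\lambda^2/3>0=2s^2\log s$.

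The key step is to compute the derivative and factor it:
\[
  g_s'(\lambda)=-\frac{2s^3}{3\lambda^2}+\frac{2s^2}{\lambda}-2s+\frac{2\lambda}{3}
  =\frac{2}{3\lambda^2}\bigl(\lambda^3-3s\lambda^2+3s^2\lambda-s^3\bigr)
  =\frac{2(\lambda-s)^3}{3\lambda^2}.
\]
From this factorization, $g_s'<0$ on $(0,s)$ and $g_s'>0$ on $(s,\infty)$. Hence $g_s$ is strictly decreasing and then strictly increasing, and its unique global minimizer is $\lambda=s$.

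Evaluating at $\lambda=s$ gives
\[
  g_s(s)=\tfrac23 s^2+(2\log s+1)s^2-2s^2+\tfrac13 s^2=2s^2\log s .
\]
So $2s^2\log s=g_s(s)\le g_s(\lambda)$ for all $\lambda>0$, and the minimum is attained at $\lambda=s$.

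There is no real obstacle here. The only point requiring care is recognizing the cubic $(\lambda-s)^3$ in the numerator of $g_s'$. An equivalent route is to observe that $g_s(\lambda)-2s^2\log s$ is, up to the logarithmic terms, a Taylor remainder of $\lambda\mapsto 2s^2\log\lambda$ at $\lambda=s$. The derivative computation above is the cleanest way to make this precise.
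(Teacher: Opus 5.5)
Your proof is correct: the derivative does factor as $g_s'(\lambda)=\frac{2(\lambda-s)^3}{3\lambda^2}$, you have $g_s(s)=2s^2\log s$, and the case $s=0$ is handled.

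It differs from the paper's proof in the choice of variable. The paper substitutes $\tau=s/\lambda$ and writes the right side minus the left side as $\lambda^2\phi(\tau)$, with $\phi(\tau)=\frac23\tau^3+\tau^2-2\tau+\frac13-2\tau^2\log\tau$. In effect it varies $s$ at fixed $\lambda$. The logarithm then survives differentiation, $\phi'(\tau)=2(\tau^2-1-2\tau\log\tau)$, so a second monotonicity step is needed to get the sign: the bracket is increasing and vanishes at $\tau=1$. You fix $s$ and vary $\lambda$ instead. This turns the logarithm into the rational term $2s^2/\lambda$, everything collapses to a perfect cube, and one derivative gives the unique minimizer $\lambda=s$ asserted by the min formula.

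Your closing Taylor remark is not accurate as phrased, but its correct form explains the cube. The right-hand side is exactly the third-order Taylor polynomial $T_3(s;\lambda)$ of $F(x)=2x^2\log x$ about the base point $x=\lambda$, evaluated at $x=s$. Differentiating a Taylor polynomial in its base point telescopes to $\partial_\lambda T_3(s;\lambda)=\frac{F^{(4)}(\lambda)}{6}(s-\lambda)^3$. Since $F^{(4)}(x)=-4/x^2$, this is your formula. Alternatively, Lagrange's remainder together with $F^{(4)}<0$ gives the inequality for $s>0$ in one line.
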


\begin{proof}
For $s>0$, put $\tau=s/\lambda$. The right-hand side of
\eqref{eq:cubic-majorant} minus its left-hand side equals $\lambda^2\phi(\tau)$,
where
\[
  \phi(\tau)=\frac23\tau^3+\tau^2-2\tau+\frac13
  -2\tau^2\log\tau.
\]
Since
\[
  \phi'(\tau)=2\bigl(\tau^2-1-2\tau\log\tau\bigr)
\]
and the function in parentheses is increasing and vanishes at $\tau=1$,
$\phi$ decreases on $(0,1)$ and increases on $(1,\infty)$. Thus
$\phi(\tau)\geq\phi(1)=0$. The case $s=0$ follows directly, and equality for
$s>0$ holds when $\lambda=s$, proving the variational formula.
\end{proof}

We now prove the polynomial logarithmic Sobolev inequality.

\begin{proof}[Proof of Theorem~\ref{thm:lsi}]
We first consider a positive polynomial $x$, where positivity is taken in
$L_\infty(\OF)$. By homogeneity, it suffices to assume $h(x^2)=1$. Write
\[
        x=a\1+u,
        \qquad
        a=h(x),
        \qquad
        h(u)=0.
\]
Then $u=u^*$ and $0<a\leq1$. Define $S,Q\geq0$ by
\[
        S^2=h(u^2)=1-a^2,
        \qquad
        Q^2=h\bigl(u(L-I)u\bigr).
\]
The centered third-moment estimate is available in every case: for $N\geq3$
this is Proposition~\ref{prop:third-ge3}, and for $N=2$ this is
Proposition~\ref{prop:N2-centered-third}.  Hence
\[
        h(u^3)\leq3S^2Q+Q^3.
\]
Set
\[
        E=h(xLx)=h(uLu)=S^2+Q^2,
        \qquad
        \lambda=a+Q.
\]
Then
\begin{align*}
        h(x^3)
        &=a^3+3aS^2+h(u^3)\\
        &\le a^3+3a(1-a^2)+3(1-a^2)Q+Q^3\\
        &=3\lambda-3a\lambda^2+\lambda^3.
\end{align*}
Apply Lemma~\ref{lem:cubic} to $x$ by functional calculus.  Since
$h(x^2)=1$,
\[
        \Ent(x^2)=h(2x^2\log x).
\]
Therefore
\begin{align*}
        \Ent(x^2)
        &\le \frac{2}{3\lambda}h(x^3)
              +(2\log\lambda+1)-2a\lambda+\frac{\lambda^2}{3}\\
        &\le \lambda^2-4a\lambda+3+2\log\lambda.
\end{align*}
On the other hand, using $E=1-a^2+Q^2$ and $\lambda=a+Q$,
\[
        2E-\bigl(\lambda^2-4a\lambda+3+2\log\lambda\bigr)
        =\lambda^2-1-2\log\lambda\ge0.
\]
Thus
\begin{equation}\label{eq:lsi-polynomial}
        \Ent(x^2)\leq2\mathcal E(x,x),
        \qquad x\in\Pol(\OF),\quad x\geq0.
\end{equation}
Proposition~\ref{prop:lsi-closure} in
Appendix~\ref{app:lsi-closure} extends \eqref{eq:lsi-polynomial} to every
positive $x\in D(\mathcal E)$.

Finally, choose $0\neq v=v^*\in V_1$ and set
$x_\varepsilon=\1+\varepsilon v$, which is positive for all sufficiently
small real $\varepsilon$. Since $h(v)=0$ and $Lv=v$,
\[
  \Ent(x_\varepsilon^2)
  =2\varepsilon^2h(v^2)+O(\varepsilon^3),
  \qquad
  \mathcal E(x_\varepsilon,x_\varepsilon)
  =\varepsilon^2h(v^2).
\]
Thus no constant smaller than $2$ can hold in \eqref{eq:lsi}.
\end{proof}

Theorem~\ref{thm:hc} now follows from Theorem~\ref{thm:lsi} by the standard
tracial Gross argument \cite{Gro75}; specifically,
\cite[Theorem~5.5]{OZ99} supplies the $L_p$-regularity required by
\cite[Theorem~3.8]{OZ99}. In our normalization this gives
$r(t)-1=(p-1)e^{2t}$. Necessity follows from the usual second-order
perturbation on $V_1$.

\appendix
\section{Approximation and extension to the form domain}
\label{app:lsi-closure}

We record the standard approximation argument that passes from positive
polynomials to the full positive form domain.

\begin{proposition}[Form-domain closure]\label{prop:lsi-closure}
Assume that
\[
 \Ent(a^2)\leq2\mathcal E(a,a)
\]
for every $a\in\Pol(\OF)$ that is positive in $L_\infty(\OF)$. Then the
same inequality holds for every positive $x\in D(\mathcal E)$.
\end{proposition}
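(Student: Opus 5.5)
The plan is a two-stage approximation. First we pass from positive polynomials to positive elements of $L_\infty(\OF)\cap D(\mathcal E)$ using a positivity-preserving smoothing. Then we pass to general positive $x\in D(\mathcal E)$ by truncation.

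\textbf{Stage 1: bounded positive $x$.} Let $x\in D(\mathcal E)$ be positive and bounded. Since the heat semigroup itself need not map into $\Pol(\OF)$, we use a family of finitely supported, unital, completely positive, $h$-preserving, symmetric multipliers $T_n$ on $\Pol(\OF)$. These act on $V_k$ by scalars $m_n(k)$, with $0\le m_n(k)\le 1$ and $m_n(k)\to 1$ for each $k$. On $O_F^+$ such multipliers are obtained from central states built from characters: one restricts $\chi_k$-type positive definite functions, for example Fejér-type averages arising from the Chebyshev structure of the fusion algebra, which is the standard approximation property of $O_F^+$ of Kac type.

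Set $a_n=T_nP_{1/n}x+\tfrac1n\1$. Each $a_n$ is a positive polynomial, and $\|a_n\|_\infty\le\|x\|_\infty+1$. Because $m_n$ and $e^{-\alpha_k/n}$ are bounded by $1$ and are diagonal in the decomposition \eqref{eq:pw}, we have $\mathcal E(a_n,a_n)\le\mathcal E(x,x)$. We also have $a_n\to x$ in $L_2$ by dominated convergence in \eqref{eq:pw}. Since all $a_n$ are uniformly bounded, $a_n^2\to x^2$ in $L_1$. The function $y\mapsto y\log y$ is continuous on the uniformly bounded positive ball, so for traces one has $\|f(a)-f(b)\|_1\to0$ when $a\to b$ in $L_2$ with uniform operator bound; this follows by approximating $f$ by polynomials uniformly on $[0,M]$. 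Hence $\Ent(a_n^2)\to\Ent(x^2)$, and the inequality passes to the limit.

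If such finitely supported completely positive multipliers are awkward to cite, we would instead truncate spectrally and restore positivity differently. We could use $P_\varepsilon x$, which is positive, and approximate it in $L_\infty$ by polynomials via the Fourier series. Absolute convergence holds because the coefficients decay like $e^{-\varepsilon\alpha_k}$, and $\alpha_k\ge k$ while $d_k$ grows only exponentially; one then adds $\delta\1$ to keep positivity. With this route the energy is only approximately nonincreasing, but it converges.

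\textbf{Stage 2: unbounded $x$.} Let $x\ge0$ lie in $D(\mathcal E)$, and set $x_M=\min(x,M)=f_M(x)$ with $f_M$ being $1$-Lipschitz and $f_M(0)=0$. The key fact needed is that $\mathcal E$ is a Dirichlet form, i.e.\ $\mathcal E(f_M(x),f_M(x))\le\mathcal E(x,x)$. This holds because $P_t$ is a symmetric completely positive unital $h$-preserving semigroup, and normal contractions operate on the associated quantum Dirichlet form (Davies--Lindsay / Cipriani). Then $x_M\to x$ in $L_2$ monotonically. Lower semicontinuity cannot be used naively for entropy, but monotone convergence works: write $\Ent(x_M^2)=h(x_M^2\log x_M^2)-h(x_M^2)\log h(x_M^2)$, split $t\log t$ into its negative part, which is bounded on $[0,1]$ and converges by dominated convergence, and its positive part, which increases. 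This gives $\Ent(x^2)\le\liminf\Ent(x_M^2)\le2\mathcal E(x,x)$.

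The main obstacle is Stage 1: producing positive \emph{polynomial} approximants with controlled energy. We would try the $P_\varepsilon$ plus absolutely convergent Fourier series route first, since it only uses facts already in the paper.
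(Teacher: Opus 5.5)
\textbf{Gap in Stage~1 for $N\geq3$.} Stage~1 breaks down when $N\geq3$, at exactly the step you flag as the obstacle.

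Option (a) is not available. Normal, $h$-preserving, finite-rank unital completely positive maps $T_n$ with $T_n\to\id$ pointwise would make $L_\infty(\OF)$ semidiscrete, hence injective. For Kac type, injectivity is equivalent to coamenability of $\OF$, which fails for $N\geq3$. The approximation properties $\OF$ does have (Haagerup property, CCAP) give multipliers that are either not finitely supported or not positive.

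Option (b) fails as you justify it. Your only bound on a block is $\|\Fou_k(X)\|_\infty\leq\|X\|_{S_1}\leq d_k\|\Fou_k(X)\|_2$, so you would need $\sum_k e^{-\varepsilon\alpha_k}d_k\|x_k\|_2<\infty$. However, $d_k=\qint{k+1}\geq q^{-k}$, while the root formula \eqref{eq:root} gives $\alpha_k\leq Nk/(N-2)$, so $\alpha_k$ grows only linearly. Hence $e^{-\varepsilon\alpha_k}d_k\geq\exp\bigl(k(\log q^{-1}-\varepsilon N/(N-2))\bigr)$ grows exponentially once $\varepsilon<(N-2)\log(q^{-1})/N$. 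For $x\in D(\mathcal E)$ you only know $\sum_k\alpha_k\|x_k\|_2^2<\infty$, so the series can diverge. Exponential decay in $\varepsilon k$ beats the exponential growth of $d_k$ only for large $\varepsilon$, whereas you need $\varepsilon\to0$.

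\textbf{The missing ingredient.} What you need is a \emph{polynomial} $L_2\to L_\infty$ bound on each block: the Haagerup-type (property RD) estimate $\|a\|_\infty\leq C_N(k+1)\|a\|_2$ for $a\in V_k$. The paper takes this from \cite[Proposition~3.4]{BVY21}. For $N=2$ it is just your trivial bound, since $d_k=k+1$, so your argument is fine in that case. With this estimate, $\sum_k e^{-s\alpha_k}\|x_k\|_\infty\leq C_N\bigl(\sum_k(k+1)^2e^{-2sk}\bigr)^{1/2}\|x\|_2<\infty$ for every $s>0$ and every $x\in L_2(\OF)$. So option (b) goes through, and even for unbounded $x$.

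This is precisely the paper's proof. It approximates $P_sx\geq0$ by the positive polynomials $y_n+\delta_n\1$ in operator norm and in form norm, which gives the inequality for $P_sx$. It then lets $s\downarrow0$, using that $y\mapsto h(y\log y)$ is lower semicontinuous on $L_1(\OF)_+$, being a supremum of the continuous affine maps $y\mapsto h(yb)-h(e^{b-1})$. So lower semicontinuity of entropy can be used directly. Your Stage~2 truncation is correct via the Markov property of $\mathcal E$, but it is not needed.
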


\begin{proof}
We first record the block estimate
\begin{equation}\label{eq:app-RD}
 \|a\|_\infty\leq C_N(k+1)\|a\|_2,
 \qquad a\in V_k,
\end{equation}
for a constant depending only on $N$. For $N\geq3$, this is
\cite[Proposition~3.4]{BVY21}, specialized to unitary $F$. For $N=2$, write
$a=\Fou_k(X)$. Since $u^k$ is unitary, \eqref{eq:fourier-isometry} and
$d_k=k+1$ give
\[
 \|a\|_\infty\leq\|X\|_{S_1}
 \leq d_k^{1/2}\|X\|_{S_2}
 =d_k\|a\|_2,
\]
so \eqref{eq:app-RD} holds with $C_2=1$.

Let $x\in D(\mathcal E)_+$, write $x_k=\Pi_k(x)$, and fix $s>0$. The
$L_2$-extension of $P_s$ preserves the positive cone. Indeed, applying
$P_s$ to bounded spectral truncations of $x$ and passing to the $L_2$-limit
gives $P_sx\geq0$.

By \eqref{eq:alpha-lower}, \eqref{eq:app-RD}, and Cauchy--Schwarz,
\begin{align*}
 \sum_{k\geq0}e^{-s\alpha_k}\|x_k\|_\infty
 &\leq C_N\sum_{k\geq0}(k+1)e^{-sk}\|x_k\|_2\\
 &\leq C_N
 \left(\sum_{k\geq0}(k+1)^2e^{-2sk}\right)^{1/2}\|x\|_2<\infty.
\end{align*}
Thus $\sum_{k\geq0}e^{-s\alpha_k}x_k$ converges in operator norm, and its
sum agrees in $L_2$ with $P_sx$. In particular,
$P_sx\in L_\infty(\OF)_+$. Set
\[
 y_n=\sum_{k=0}^n e^{-s\alpha_k}x_k,
 \qquad
 \delta_n=\|y_n-P_sx\|_\infty,
 \qquad
 z_n=y_n+\delta_n\1.
\]
Every $y_n$ is self-adjoint, and
$y_n\geq P_sx-\delta_n\1\geq-\delta_n\1$; hence
$z_n\in\Pol(\OF)$ and $z_n\geq0$. Since $\delta_n\to0$, one has
$z_n\to P_sx$ in operator norm. As $L\1=0$,
\[
 \mathcal E(z_n-P_sx,z_n-P_sx)
 =\sum_{k>n}\alpha_ke^{-2s\alpha_k}\|x_k\|_2^2
 \longrightarrow0.
\]
Therefore $z_n\to P_sx$ in the form norm and
$\mathcal E(z_n,z_n)\to\mathcal E(P_sx,P_sx)$. Their spectra lie in a
common compact interval for all large $n$. Since $t\mapsto t^2\log t^2$,
with value $0$ at $t=0$, is continuous on this interval, norm continuity of
the continuous functional calculus gives
\[
 h(z_n^2\log z_n^2)\longrightarrow
 h\bigl((P_sx)^2\log (P_sx)^2\bigr),
 \qquad
 h(z_n^2)\longrightarrow h((P_sx)^2).
\]
Consequently, $\Ent(z_n^2)\to\Ent((P_sx)^2)$.
Applying the polynomial inequality to $z_n$ and passing to the limit yields
\begin{equation}\label{eq:app-lsi-smoothed}
 \Ent((P_sx)^2)\leq2\mathcal E(P_sx,P_sx).
\end{equation}

It remains to let $s\downarrow0$. Since $x\in D(\mathcal E)$,
\begin{align*}
 &\|P_sx-x\|_2^2+\mathcal E(P_sx-x,P_sx-x)\\
 &\qquad
 =\sum_{k\geq0}(1+\alpha_k)(1-e^{-s\alpha_k})^2\|x_k\|_2^2
 \longrightarrow0
\end{align*}
by dominated convergence. Since $P_sx$ and $x$ are self-adjoint elements of
$L_2(\OF)$,
\[
 \|(P_sx)^2-x^2\|_1
 \leq(\|P_sx\|_2+\|x\|_2)\|P_sx-x\|_2
 \longrightarrow0.
\]
Entropy is lower semicontinuous on $L_1(\OF)_+$. Indeed,
\[
 h(y\log y)=\sup_{b=b^*\in L_\infty(\OF)}
 \bigl\{h(yb)-h(e^{b-1})\bigr\},
 \qquad y\in L_1(\OF)_+,
\]
so $y\mapsto h(y\log y)$ is lower semicontinuous, while
$h(y)\log h(y)$ is continuous in the $L_1$-norm. Consequently, using
\eqref{eq:app-lsi-smoothed},
\[
 \Ent(x^2)
 \leq\liminf_{s\downarrow0}\Ent((P_sx)^2)
 \leq2\lim_{s\downarrow0}\mathcal E(P_sx,P_sx)
 =2\mathcal E(x,x).
\]
The last equality follows by dominated convergence in the Peter--Weyl
expansion.
\end{proof}

\section{An alternative \texorpdfstring{$N=2$}{N = 2} proof via graded twisting}
\label{app:N2-twist}

We record a proof sketch of Proposition~\ref{prop:N2-centered-third} based on
the structure for $N=2$. Up to unitary congruence, the two cases are
\[
 O_F^+\cong SU(2)\quad(F\overline F=-I_2),
 \qquad
 O_F^+\cong O_2^+\cong SU_{-1}(2)\quad(F\overline F=I_2);
\]
see \cite[Examples~3.3 and~4.10]{BNY16}.  Put
$\Lambda|_{V_k}=kI_{V_k}$.  Since $\alpha_k=k(k+2)/3$, we have
$L\geq\Lambda$.

In the classical branch, $SU(2)\cong S^3$ and $V_k$ is the space of spherical
harmonics of degree $k$. Since
$-\Delta_{S^3}|_{V_k}=k(k+2)I_{V_k}$, one has
\[
 \Lambda=\sqrt{I-\Delta_{S^3}}-I,
 \qquad
 L=-\frac{\Delta_{S^3}}3.
\]
The $p=3$ case of
Beckner's sharp inequality \cite[Theorem~6]{Beckner93} states that
\begin{equation}\label{eq:app-Beckner}
 \|g\|_{L_3(S^3)}^2
 \leq\langle g,(\Lambda+I)g\rangle_2
 \qquad(g\ \text{real}).
\end{equation}
Consequently, for $y=y^*$,
\begin{equation}\label{eq:app-cubic-Sobolev}
 |h(y^3)|
 \leq\int_{S^3}|y|^3\,d\nu
 \leq h\bigl(y(\Lambda+I)y\bigr)^{3/2}.
\end{equation}

For $O_2^+$, let $W_k$ be the degree-$k$ Peter--Weyl space of $SU(2)$ and
let $\theta$ be the twisting involution.  The graded twist gives
\[
 \Pol(O_2^+)=\bigoplus_{k\geq0}W_k\upsilon^k
 \subset\Pol(SU(2))\rtimes_\theta\mathbb Z_2,
 \qquad \upsilon^2=1,\qquad \upsilon f\upsilon=\theta(f).
\]
In the regular crossed-product representation,
$\rho(f)=\operatorname{diag}(f,\theta(f))$ and
$\rho(\upsilon)=\begin{psmallmatrix}0&1\\1&0\end{psmallmatrix}$, so the
corresponding matrix model satisfies
\[
 \rho(f+g\upsilon)=
 \begin{pmatrix}f&g\\ \theta(g)&\theta(f)\end{pmatrix},
 \qquad h=(\nu\otimes\operatorname{tr}_2)\circ\rho,
\]
where $\nu$ is normalized Haar measure and $\operatorname{tr}_2$ is the
normalized matrix trace; see
\cite[Proposition~3.1 and Examples~3.3, 4.10]{BNY16} and
\cite[Theorem~1.1 and Proposition~4.1]{Bar15}.

Write $y=a+b\upsilon=y^*$, where $a$ contains only even-degree components and
$b$ only odd-degree components.
Then $a=a^*$ and $\theta(b)=b^*$. With the Pauli matrices, put
\[
 f_0=\frac{a+\theta(a)}2,\qquad
 f_3=\frac{a-\theta(a)}2,\qquad
 f_1=\frac{b+\theta(b)}2,\qquad
 f_2=\frac{\theta(b)-b}{2i}.
\]
The functions $f_j$ are real and
$\rho(y)=f_0I_2+\sum_{j=1}^3f_j\sigma_j$. Moreover, $f_0,f_3$ contain only
even-degree components and $f_1,f_2$ only odd-degree components, while
$f_0,f_1$ are $\theta$-even and
$f_2,f_3$ are $\theta$-odd. The involution $\theta$ preserves Haar measure and
is therefore unitary on $L_2(SU(2))$. Since $\Lambda$ preserves degree and
commutes with $\theta$, these four functions are pairwise orthogonal for the
quadratic form of $\Lambda+I$.

Set
\[
 E_y=h\bigl(y(\Lambda+I)y\bigr),
 \qquad
 g_\omega=f_0+\sum_{j=1}^3\omega_jf_j,
 \quad \omega\in\{\pm1\}^3.
\]
The matrix model intertwines $\Lambda$ entrywise, and the Pauli matrices are
orthonormal for $\operatorname{tr}_2$. Therefore
\begin{equation}\label{eq:app-equal-energy}
 E_y=\sum_{j=0}^3\langle f_j,(\Lambda+I)f_j\rangle_2
 =\langle g_\omega,(\Lambda+I)g_\omega\rangle_2.
\end{equation}
The Pauli anticommutation relations also give the pointwise identity
\[
 \operatorname{tr}_2(\rho(y)^3)
 =f_0^3+3f_0(f_1^2+f_2^2+f_3^2)
 =2^{-3}\sum_{\omega\in\{\pm1\}^3}g_\omega^3.
\]
Thus, by the Haar-state formula, \eqref{eq:app-Beckner}, and
\eqref{eq:app-equal-energy},
\[
 |h(y^3)|
 \leq2^{-3}\sum_\omega\int_{S^3}|g_\omega|^3\,d\nu
 \leq2^{-3}\sum_\omega E_y^{3/2}
 =E_y^{3/2}.
\]
Hence \eqref{eq:app-cubic-Sobolev} holds for both normal forms.

Finally, let $u=u^*\in\Pol(O_F^+)$ satisfy $h(u)=0$, and set $S^2=h(u^2)$ and
$Q_\Lambda^2=h(u(\Lambda-I)u)\geq0$; this is nonnegative because $u$ has no
$V_0$ component. For $R\geq0$,
\[
 h((R\1\pm u)^3)=R^3+3RS^2\pm h(u^3),
\]
whereas
\[
 h\bigl((R\1\pm u)(\Lambda+I)(R\1\pm u)\bigr)
 =R^2+2S^2+Q_\Lambda^2.
\]
Applying \eqref{eq:app-cubic-Sobolev} therefore gives
\begin{equation}\label{eq:app-shift}
 \pm h(u^3)
 \leq(R^2+2S^2+Q_\Lambda^2)^{3/2}-R^3-3RS^2.
\end{equation}
If $Q_\Lambda>0$, choose $R=S^2/Q_\Lambda$. Then
$R^2+2S^2+Q_\Lambda^2=(R+Q_\Lambda)^2$, and \eqref{eq:app-shift} yields
\[
 |h(u^3)|\leq3S^2Q_\Lambda+Q_\Lambda^3.
\]
The same conclusion follows by letting $R\to\infty$ when $Q_\Lambda=0$.
Since $L\geq\Lambda$, we have $Q_\Lambda\leq Q$, where
$Q^2=h(u(L-I)u)$, and Proposition~\ref{prop:N2-centered-third} follows.

This route first proves the stronger uncentered estimate
\eqref{eq:app-cubic-Sobolev} through Beckner's theorem. The proof in
Section~\ref{sect:N2} establishes the centered estimate involving $L-I$
directly; it does not recover the stronger $\Lambda+I$ estimate.

\end{document}